\newtheorem{theorem}{Theorem}[section]
\newtheorem{lemma}{Lemma}[section]
\theoremstyle{definition}
\newtheorem{definition}{Definition}[section]
\newtheorem{proposition}{Proposition}[section]
\newtheorem{example}{Example}[section]
\newtheorem*{remark}{Remark}
\title{\textbf{A Beginner’s Guide to Homological Algebra: A Comprehensive Introduction for Students}}
\author{
  Andy Eskenazi\textsuperscript{1,2}\\
  \texttt{andyeske@sas.upenn.edu}
  \and
  Kevin You\textsuperscript{1} \\
  \texttt{kevinyou@sas.upenn.edu}
  \and
  Will Vauclain\textsuperscript{1,3}\\
  \texttt{willvauc@seas.upenn.edu}
  \and
  Robin Murugadoss\textsuperscript{1}\\
  \texttt{gmdoss@sas.upenn.edu}
}
\renewcommand\@date{{
 \textsuperscript{1}\normalsize Department of Mathematics, University of Pennsylvania\par
 \textsuperscript{2}\normalsize Department of Mechanical Engineering and Applied Mechanics, University of Pennsylvania\par
 \textsuperscript{3}\normalsize Department of Computer and Information Science, University of Pennsylvania
 }}
\DeclareSymbolFont{bbold}{U}{bbold}{m}{n}
\DeclareSymbolFontAlphabet{\mathbbold}{bbold}
\renewcommand{\hom}[1]{\mathrm{Hom}_{#1}}
\DeclareMathOperator{\Ob}{\mathrm{Ob}}
\DeclareMathOperator{\id}{\operatorname{id}}
\DeclareMathOperator{\coker}{\operatorname{coker}}
\DeclareMathOperator{\Ab}{\mathrm{\underline{Ab}}}
\DeclareMathOperator{\injectsto}{\hookrightarrow}
\DeclareMathOperator{\im}{\operatorname{im}}
\DeclareMathOperator{\Tor}{\operatorname{Tor}}
\newcommand{\rmod}[1]{\mathrm{\underline{Mod}}_{#1}}
\newcommand{\ch}[1]{{#1}_{\bullet}}
\newcommand{\Ch}{\mathbf{Ch}}
\begin{document}

\maketitle

\bigskip
\textbf{Abstract.} Homological algebra is often understood as the translator between the world of topology and algebra. However, this branch of mathematics is worth studying by itself, given that it provides fascinating perspectives about other disciplines, most notably, category theory. In this paper, we seek to provide an introductory guide for advanced students of mathematics and related specialties seeking to get started on homological algebra, covering the necessary central topics to later delve deeper into more complex aspects of this field and beyond. This work starts by presenting the notion of chain complexes of algebraic structures, and then moves into exploring homology modules and chain homotopies. Next, we provide an overview of projective resolutions and conclude by entering the world of category theory by looking at Tor functors.

\section{Introduction}
\label{sec:intro}

Homological algebra is a very fascinating branch of mathematics, one that finds its origins, generally speaking, in algebraic topology, in an attempt to construct bridges between topological shapes and algebraic structures. Indeed, since its inception, homological algebra has become a very valuable tool for topologists, providing an algebraic approach for the classification of various types of shapes. However, beyond the geometrical applications, this field is worth studying independently, due to its wonderful insights and the mechanism it supplies to view and understand complex concepts in the world of mathematics, such as category theory\footnote{Category Theory is a wonderful branch of mathematics that can be understood, vaguely speaking, as an abstraction from mathematical structures. In essence, this field deals with mathematical objects, such as sets, groups, topologies, and the relationships between them. In that sense, categories are composed of objects such as those listed previously, as well as the relationships (or morphisms) between them. For instance, the category of sets contains all sets as its objects and functions as its morphisms, while the category of groups contains groups as its objects and group homomorphisms as its morphisms. This work will often times point out that an algebraic structure is part of a category to make special emphasis on the properties it might posses. The definition of a category will be provided more rigorously in Section 2 of this work}, in a different and simpler way.

Through this work, we hope to provide the advanced undergraduate or first year math graduate student with a complete, self-contained guide to this branch of mathematics, assuming the knowledge of basic concepts from an introductory abstract algebra course. And although various references are included throughout this work to established, advanced-level resources such as Weibel's ``An Introduction to Homological Algebra" \cite{weibel}, our work is written specifically with our above-mentioned audiences in mind, serving as a navigation tool, with clear and simple language, as well as a multitude of examples.

Indeed, using examples sourced from related fields such as group theory, topology and category theory, this work begins by presenting the notion of chain complexes (sequence of maps between Abelian groups or modules) in Section 2, to then move into introducing the concept of homology modules (an algebraic tool to compute the number of holes of a topological shape) in Section 3 and chain homotopies (the ability to continuously deform one shape to get another one) in Section 4. To illustrate the beauty of homological algebra in and of itself, this work also explores projective resolutions (a type of sequence of algebraic objects used to describe invariant properties) in Section 5 and ventures into the world of category theory by looking at Tor functors (a key tool that simplifies the analysis and classification of topological shapes) in Section 6.

However, before setting-off in the exploration of these various concepts, in the remaining portion of the introduction, we will review one of the most fundamental theorems from group theory, and use it alongside an example that will pave the way for the latter sections of this work. 

Effectively, algebra, on its most fundamental level, is mostly concerned with studying the structures of objects, and one of the most useful theorems to achieve this task is the first isomorphism theorem, which we recall. 
\begin{theorem}
Let $\varphi:X\rightarrow Y$ be a homomorphism (of groups G, rings R, or R-modules). Then, $X/\ker(\varphi)\cong \varphi(X)$.
\end{theorem}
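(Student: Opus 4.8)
The plan is to construct an explicit isomorphism from $X/\ker(\varphi)$ onto $\varphi(X)$ and then to verify it has all the required properties. Throughout, write $K = \ker(\varphi)$. The first thing to record is that $K$ is the appropriate kind of substructure --- a normal subgroup when $X$ is a group, a two-sided ideal when $X$ is a ring, a submodule when $X$ is an $R$-module --- so that the quotient $X/K$ is again an object of the same type and its operations are well-defined; likewise $\varphi(X)$ is a sub-object of $Y$. This is a routine verification straight from the definition of a homomorphism, and it is what makes the statement even sensible.

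Next I would define the candidate map $\overline{\varphi} \colon X/K \to \varphi(X)$ by $\overline{\varphi}(xK) = \varphi(x)$, or in additive notation $\overline{\varphi}(x + K) = \varphi(x)$. The crucial step --- and the only place where quotienting by precisely the kernel (rather than by some smaller substructure) is used --- is checking that $\overline{\varphi}$ is well-defined on cosets: if $xK = x'K$, then $x^{-1}x' \in K$, so $\varphi(x)^{-1}\varphi(x') = \varphi(x^{-1}x') = e$ and hence $\varphi(x) = \varphi(x')$; the ring and module cases are identical in additive notation, using $x - x' \in K$. I regard this well-definedness check as the main obstacle, modest though it is; everything after it is essentially formal.

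Then I would verify that $\overline{\varphi}$ is a homomorphism of the relevant type. This is immediate because the operations on $X/K$ are defined representative-wise and $\varphi$ already respects them: for instance $\overline{\varphi}(xK \cdot yK) = \overline{\varphi}(xyK) = \varphi(xy) = \varphi(x)\varphi(y) = \overline{\varphi}(xK)\,\overline{\varphi}(yK)$, and analogously for addition and, in the module case, for scalar multiplication. Surjectivity of $\overline{\varphi}$ onto $\varphi(X)$ is built into the choice of codomain, so it remains to check injectivity, for which it suffices to compute the kernel: $\overline{\varphi}(xK) = e$ forces $\varphi(x) = e$, i.e. $x \in K$, i.e. $xK$ is the identity coset, so $\ker(\overline{\varphi})$ is trivial.

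Since $\overline{\varphi}$ is a homomorphism that is both injective and surjective, it is an isomorphism, and therefore $X/\ker(\varphi) \cong \varphi(X)$, as claimed. As an optional closing remark I would note that $\overline{\varphi}$ is the unique morphism making the triangle formed by the quotient projection $X \to X/K$, the inclusion $\varphi(X) \hookrightarrow Y$, and $\varphi$ itself commute; this universal-property description is the cleaner way to see the result and foreshadows the categorical language developed later in the paper.
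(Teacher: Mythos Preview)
Your proof is correct and follows essentially the same route as the paper: define the map $X/\ker\varphi \to \varphi(X)$ by $x + \ker\varphi \mapsto \varphi(x)$, check well-definedness via $x_1 - x_2 \in \ker\varphi$, then verify the homomorphism, surjectivity, and injectivity properties in turn. Your version is in fact a bit more careful (you justify that the quotient exists, spell out the homomorphism check, and argue injectivity via triviality of $\ker\overline{\varphi}$ rather than elementwise), and your closing universal-property remark is a nice touch that the paper omits.
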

\begin{proof}
\[\begin{tikzcd}
	X & Y \\
	{X/\ker\varphi}
	\arrow["\varphi", from=1-1, to=1-2]
	\arrow["\gamma"', from=1-1, to=2-1]
	\arrow["f"', from=2-1, to=1-2]
\end{tikzcd}\]
We follow the commutative diagram above to give a version of the proof, where $\gamma$ is the map sending $x\mapsto x+\ker\varphi$. Suppose that $\varphi:X\rightarrow Y$ is a homomorphism of the stated objects. Define the map $f:X/\ker\varphi\rightarrow \varphi(X)$ by $x+\ker\varphi\mapsto \varphi(x)$. First, we show that $f$ is well defined. If $x_1+\ker\varphi=x_2+\ker\varphi$, then $x_1-x_2\in \ker\varphi$, implying that $\varphi(x_1)=\varphi(x_2)$, so thus $f$ agrees on both arguments. One can show that $f$ is a homomorphism in any of the three categories posited by using the fact that $\varphi$ is a homomorphism. We see that $f$ is surjective; if $\varphi(x)\in \varphi(X)$, then $f(x+\ker\varphi)=\varphi(x)$. Finally, $f$ is injective. If $f(x_1+\ker\varphi)=f(x_2+\ker\varphi)$, then $\varphi(x_1)=\varphi(x_2)$. This implies that $\varphi(x_1-x_2)=0$, so hence $x_1-x_2\in \ker\varphi$. Evidently, $x_1+\ker\varphi=x_2+\ker\varphi$. 
\end{proof}
Now that we are equipped with this powerful theorem, let's examine the following motivating example.
\begin{example}
View $\mathbb{Z}$ as an Abelian group; let $\mathbb{Z}_2$ be the cyclic group of two elements, $\{e,x\}$. We define $\varphi:\mathbb{Z}\rightarrow \mathbb{Z}_2$ by \[
n\mapsto \begin{cases}
e,&n \equiv 0 \pmod{2} \\
x,& n \equiv 1 \pmod{2}
\end{cases}
\]where we leave the verification that $\varphi$ is a surjective group homomorphism to the reader. Note that $\ker(\varphi)=2\mathbb{Z}$; the first isomorphism theorem precisely asserts that $\mathbb{Z}/2\mathbb{Z}\cong \mathbb{Z}_2$.
\end{example}
An immediate way of interpreting this result is that the cosets $2\mathbb{Z}$ and $2\mathbb{Z}+1$, when endowed with the proper addition, form an Abelian group isomorphic to $\mathbb{Z}_2$. Indeed, this gives us insight into the structure of $\mathbb{Z}$, in particular, that $\mathbb{Z}$ can be divided into cosets, and that these cosets themselves form another Abelian group. Observe that $\mathbb{Z}\cong 2\mathbb{Z}$ as Abelian groups via the map $n\mapsto 2n$. Moreover, it is true in general that any two cosets in a quotient group are in bijection with each other; this is not hard to prove. 

This leads us to yet another interpretation. Effectively, we have used a subgroup $2\mathbb{Z}$ of $\mathbb{Z}$ to create a distinct Abelian group out of elements of $\mathbb{Z}$, where the elements of this new group are in bijection with $\mathbb{Z}$. However, what if we want to reverse this process? That is to say, suppose we are given $\mathbb{Z}_2$; can we find an Abelian group $B$ containing a subgroup that looks like $\mathbb{Z}$, such that the cosets when quotienting by this subgroup in $B$ form an Abelian group isomorphic to $\mathbb{Z}_2$? We say we are extending $\mathbb{Z}_2$ by $\mathbb{Z}$. In other words, we are trying to replace the elements of $\mathbb{Z}_2$ with sets that look like $\mathbb{Z}$.
\begin{example}
We rewrite our original example. Here, $\mathbb{Z}$ bijects onto $2\mathbb{Z}\subseteq \mathbb{Z}$ via the map $\psi(n)=2n$, and $\mathbb{Z}$ surjects onto $\mathbb{Z}_2$ via $\varphi$ as defined before:
\[
\mathbb{Z}\overset{\psi}{\rightarrow}\mathbb{Z}\overset{\varphi}{\rightarrow}\mathbb{Z}_2 \text{ .}
\]
Notice that $\im(\psi)=\ker(\varphi)=2\mathbb{Z}$. This is precisely what we want, for this means that $\mathbb{Z}/\im(\psi)\cong \varphi(\mathbb{Z})$. Furthermore, since $\psi:\mathbb{Z}\rightarrow \mathbb{Z}\ (\varphi:\mathbb{Z}\rightarrow \mathbb{Z}_2)$ is injective (surjective), $\mathbb{Z}\cong \im(\psi)$ and $\varphi(\mathbb{Z})=\mathbb{Z}_2$. We say that $\mathbb{Z}$ is an extension of $\mathbb{Z}_2$ by $\mathbb{Z}$.
\end{example}

The reason why these examples are interesting to analyze is because they illustrate the need to simultaneously use the injectivity of $\psi$, the surjectivity of $\varphi$, and the fact that $\im(\psi)=\ker(\varphi)$ to find the extension of our Abelian group. In fact, generally speaking, the problem of finding an extension of $\mathbb{Z}_2$ by $\mathbb{Z}$, denoted $B$, is precisely the task of determining when the sequence \[
\mathbb{Z}\overset{\psi}{\rightarrow}B\overset{\varphi}{\rightarrow}\mathbb{Z}_2
\]satisfies the aforementioned conditions. When it does, it is called a \textbf{short exact sequence}, and this is one of the key foundational concepts behind homological algebra. We will generalize this idea in the following section.

\section{Chain Complexes}
\label{sec:will}

The story of chain complexes, much like the story of most branches of mathematics, can be understood as successive enhancements on a concept to tackle ever greater challenges. In this case, chain complexes allow us to study a wide variety of structures that behave similarly to these short exact sequences.

For the first part of our journey, it will be helpful to introduce the following mental model: given a map \(\phi: X \to Y\) between two algebraic structures $X$ and $Y$ (such as groups or modules), we can view it in two separate ways. In one respect, the most straightforwards interpretation is that the map \(\phi\) \textit{creates} an \textit{object} (or element) of $Y$ using an \textit{object} (or element) of $X$. In this sense, we have that
\(\im(\phi) \subseteq Y\) represents the \emph{objects that are constructible by \(\phi\)}. In another, less obvious, respect, the map \(\phi\) associates each element $x \in X$ with an element $y \in Y$ according to some rule (dictated by the map), so one can also interpret \(\phi\) as \textit{testing} whether or not $x \in X$ satisfies a certain predicate that would cause it to map to the zero element $0 \in Y$. Under this secondary view, one can consider \(\ker(\phi)\) as representing the \emph{objects of $X$ that pass the test of \(\phi\)}, the test being whether these elements map to $0 \in Y$.

To visualize this duality in the essence of a function concretely, consider the two maps $\psi$ and $\varphi$ from the introduction section. Here, the map \(\psi: n \mapsto 2n\) \textit{constructs} the even integers from the integers, since every even integer can be expressed as $2n$ for some $n \in \mathbb{Z}$. On the other hand, the map \(\varphi: n \mapsto [n]_2\) \textit{tests} (or can be understood as a test to see) if an integer is divisible by 2, because an integer is divisible by 2 if and only if $[n]_2 = [0]_2$ (where $[x]_2$ is the equivalence class of $x$ modulo 2).

Philosophically, however, constructions are not always perfect. Consider, for example, the following unsuccessful attempt to construct the multiples of three:
\[\mathbb{Z} \overset{\psi}{\to} \mathbb{Z} \overset{\varphi}{\to} \mathbb{Z}_3\]
where \(\psi: n \mapsto 6n\) and \(\varphi: n \mapsto [n]_3\). Given a number \(n \in \im(\psi)\), \(n\) is in the kernel of \(\varphi\), because \(n\) can be written as \(6z, z \in \mathbb{Z}\), and \([6z]_3 = [6]_3 \cdot [z]_3 = [0]_3 \cdot [z]_3 = [0]_3\). This construction is not, however, perfect, because we are missing the odd multiples of 3, like 3, 9, etc., so \(\im(\psi) \subsetneq \ker(\varphi)\). We will see a formal definition soon, but when this happens we say that the sequence \(\mathbb{Z} \overset{\psi}{\to} \mathbb{Z} \overset{\varphi}{\to} \mathbb{Z}_3\) is \emph{not exact}. In this work, we will only be interested in examples where \(\im(\psi) \subseteq \ker(\varphi)\), which allows for the possibility of ``imperfect'' constructions $\psi$, but all outputs of $\psi$ still \textit{must pass the test} of $\varphi$.

Then, it might make sense to ask what happens if we chain multiple of these construction-test sequences together. For example, consider the sequence \(0 \overset{f}{\to} \mathbb{Z} \overset{\psi}{\to} \mathbb{Z} \overset{\varphi}{\to} \mathbb{Z}_2 \overset{g}{\to} 0\), where \(f: 0 \to \mathbb{Z}\) is the inclusion map from the trivial subgroup of \(\mathbb{Z}\), \(g: \mathbb{Z}_2 \to 0\) is the trivial homomorphism, and \(\psi\) and \(\varphi\) are defined as in the first example above. The first construction is \(f\) and the first test is \(\psi\), then \(\psi\) is also the second construction being tested by \(\varphi\), and then finally \(\varphi\) is a construction being tested by \(g\). One can check (and we will verify below) that \(\im(f) \subseteq \ker(\psi)\), \(\im(\psi) \subseteq \ker(\varphi)\), and \(\im(\varphi) \subseteq \ker(g)\), so this sequence is indeed of the form that we are interested in.

Now that we have seen sufficient motivation, we can begin to talk about the formal definition of what it means to have a chain complex:

\begin{definition}[Chain Complexes of Abelian Groups]
A \textbf{chain complex} $(\ch{A}, d)$ of Abelian groups is a collection of groups \(\{A_i\}_{i \in \mathbb{Z}}\) together with group homomorphisms (often called \textbf{boundary morphisms}) \(d_i: A_i \to A_{i - 1}\) for each \(i \in \mathbb{Z}\) such that \(d_{i - 1} \circ d_i = 0, \forall i \in \mathbb{Z}\).
\end{definition}
\begin{remark}
When unambiguous, mathematicians usually refer to a chain complex $(\ch{A}, d)$ as $\ch{A}$ or $A$ and the boundary morphisms are inferred.
\end{remark}

Notice that this formal definition does not \emph{quite} match the above examples, although we can recover what we were talking about before with the following general fact from Group Theory:

\begin{proposition}
\(d_{i - 1} \circ d_i = 0 \iff \im(d_i) \subseteq \ker(d_{i - 1})\)
\end{proposition}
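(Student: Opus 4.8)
The plan is to prove the biconditional by unpacking both sides in terms of elements and using the definition of composition. Since both $d_i$ and $d_{i-1}$ are group homomorphisms, the expression $d_{i-1} \circ d_i = 0$ is a statement about a morphism $A_i \to A_{i-2}$, namely that it is the zero homomorphism, which is equivalent to saying $(d_{i-1} \circ d_i)(a) = 0$ for every $a \in A_i$.

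First I would prove the forward direction. Assume $d_{i-1} \circ d_i = 0$. Take an arbitrary element $y \in \im(d_i)$; by definition of image, $y = d_i(a)$ for some $a \in A_i$. Then $d_{i-1}(y) = d_{i-1}(d_i(a)) = (d_{i-1} \circ d_i)(a) = 0$, so $y \in \ker(d_{i-1})$. Since $y$ was arbitrary, $\im(d_i) \subseteq \ker(d_{i-1})$.

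Next I would prove the converse. Assume $\im(d_i) \subseteq \ker(d_{i-1})$. To show $d_{i-1} \circ d_i = 0$, take any $a \in A_i$. Then $d_i(a) \in \im(d_i) \subseteq \ker(d_{i-1})$, so $d_{i-1}(d_i(a)) = 0$, i.e. $(d_{i-1} \circ d_i)(a) = 0$. Since this holds for all $a \in A_i$, the composite $d_{i-1} \circ d_i$ is the zero homomorphism.

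Honestly, there is no real obstacle here: the statement is essentially a restatement of the definition of image and kernel together with what it means for a homomorphism to be zero, so the proof is a short element-chase in both directions. The only thing worth being careful about is making explicit that ``$= 0$'' for a homomorphism means ``sends every element to $0$,'' since that equivalence is exactly the hinge of the argument; I would state that once at the outset so both directions can invoke it cleanly.
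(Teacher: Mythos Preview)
Your proof is correct and follows essentially the same element-chase as the paper's own argument; in fact, you spell out the converse direction explicitly, whereas the paper simply remarks that ``the same logic is applicable for the backwards direction.''
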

\begin{proof}
Let \(y \in \im(d_{i})\). By definition, this means that \(\exists x \in A_i\) such that \(y = d_i(x)\). As a result, assuming the left-hand side is true, we have that \(d_{i - 1}(y) = (d_{i - 1} \circ d_i)(x) = 0(x) = 0\), so \(y \in \ker(d_{i - 1})\). The same logic is applicable for the backwards direction.
\end{proof}

Now, before moving on to ever higher levels of abstraction, it is worth taking a moment to pause and check whether or not the construction of a chain complex is indeed a natural extension of the concept of a short exact sequence of Abelian groups. To do this, we will need to introduce one more definition.

\begin{definition}
We say that a chain complex $(\ch{A}, d)$ is \textbf{exact} if \(\im(d_i) = \ker(d_{i - 1}), \forall i \in \mathbb{Z}\)
\end{definition}

\begin{proposition}
Given a sequence of Abelian group homomorphisms \(A \overset{f}{\to} B \overset{g}{\to} C\), this sequence is a short exact sequence \(0 \to A \overset{f}{\to} B \overset{g}{\to} C \to 0\) if and only if the following chain complex is exact:

\[\ldots{} \to 0 \to 0 \overset{\iota}{\to} A \overset{f}{\to} B \overset{g}{\to} C \overset{\tau}{\to} 0 \to 0 \to \ldots{} \]

where \(0 \to 0\) are the identities (denoted $\id_0$ below), \(\iota: 0 \to A\) is the inclusion from the trivial subgroup, and \(\tau: C \to 0\) is the trivial group homomorphism.
\end{proposition}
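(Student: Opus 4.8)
The plan is to prove the biconditional by a direct definition chase: I would unwind what exactness of the displayed chain complex means \emph{at each individual position}, and then check that, taken together, these conditions are precisely the three requirements packaged into the notion of a short exact sequence $0 \to A \overset{f}{\to} B \overset{g}{\to} C \to 0$ --- namely that $f$ is injective, that $g$ is surjective, and that $\im(f) = \ker(g)$. (Observe that there is nothing separate to verify about being a chain complex: exactness demands $\im = \ker$ at every spot, which in particular gives $\im \subseteq \ker$ at every spot, so any exact sequence of this shape is automatically a chain complex.)

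First I would fix the indexing, placing $C$ in degree $0$, $B$ in degree $1$, $A$ in degree $2$, the copy of $0$ that maps in via $\iota$ in degree $3$, and the copy of $0$ that receives $\tau$ in degree $-1$, with the zero group in every remaining degree. Then I would run through the positions. At every degree occupied by a zero group, exactness reads $\{0\} = \{0\}$ and holds vacuously, so those positions impose no condition. At degree $2$, the group $A$, exactness says $\im(\iota) = \ker(f)$; since $\iota$ is the unique homomorphism out of the zero group, $\im(\iota) = \{0\}$, so this condition is exactly $\ker(f) = 0$, i.e.\ $f$ is injective. At degree $1$, the group $B$, exactness says $\im(f) = \ker(g)$ verbatim. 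At degree $0$, the group $C$, exactness says $\im(g) = \ker(\tau)$; since $\tau$ is the trivial homomorphism into $0$, $\ker(\tau) = C$, so this condition is exactly $\im(g) = C$, i.e.\ $g$ is surjective.

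Finally I would assemble the pieces: the chain complex is exact iff it is exact at every position, iff (discarding the vacuous ones) the three conditions ``$f$ injective,'' ``$\im(f) = \ker(g)$,'' and ``$g$ surjective'' all hold simultaneously, which is precisely the definition of the short exact sequence $0 \to A \overset{f}{\to} B \overset{g}{\to} C \to 0$. Since each translation in the previous paragraph is itself an ``if and only if,'' conjoining them delivers the claimed biconditional in both directions at once.

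I do not expect any genuine obstacle here; the statement is essentially an exercise in matching definitions. The only points deserving a little care are the bookkeeping of positions --- ensuring no position is skipped or double-counted, and correctly distinguishing the positions that carry content from those that are vacuous --- together with the two elementary facts that the image of any homomorphism out of the zero group is $\{0\}$ and the kernel of any homomorphism into the zero group is its whole domain. These two facts are exactly what convert the uniform statement ``$\im = \ker$'' at the two outer positions into the familiar injectivity and surjectivity conditions, so I would state them explicitly rather than leave them implicit.
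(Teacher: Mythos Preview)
Your proposal is correct and follows essentially the same approach as the paper: both arguments unwind exactness position by position, using that $\im(\iota)=\{0\}$ and $\ker(\tau)=C$ to translate the outer exactness conditions into injectivity of $f$ and surjectivity of $g$. The only cosmetic difference is that the paper treats the two directions separately and checks the zero-group positions explicitly, whereas you bundle everything into a single chain of biconditionals and dismiss those positions as vacuous.
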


\begin{proof}
First, suppose that \(A \overset{f}{\to} B \overset{g}{\to} C\) is short exact. By definition, this means that \(\im(f) = \ker(g)\), \(f\) is injective, and \(g\) is surjective. Now, to check that the above chain complex is exact, we need to verify that the other group homomorphisms also satisfy the exactness criterion. Because \(\im(\id_0) = 0 = \ker(\id_0)\), so we just need to check the following four:
\begin{itemize}
\item \(\im(\id_0) \overset{?}{=} \ker(\iota)\)

\(\iota\) maps \(0\) (the only element) to \(0\), so \(\ker(\iota) = \{0\} = \im(\id_0)\).

\item \(\im(\iota) \overset{?}{=} \ker(f)\)

\(\iota\) maps \(0\) (the only element) to \(0\), so \(\im(\iota) = \{0\}\). \(f\) is injective, so \(\ker(f) = \{0\} = \im(\iota)\).

\item \(\im(g) \overset{?}{=} \ker(\tau)\)

\(\tau\) is the trivial homomorphsm so \(\ker(\tau) = C\). Because \(g\) is surjective, \(\im(g) = C = \ker(\tau)\).

\item \(\im(\tau) \overset{?}{=} \ker(\id_0)\)

\(\tau\) maps every element to \(0\), so \(\im(\tau) = \{0\} = \ker(\id_0)\).
\end{itemize}

Now, suppose that the given chain complex is exact. Because \(\im(\iota) = \ker(f)\) and \(\im(\iota) = \{0\}\), \(\ker(f) = \{0\}\), so \(f\) is injective. Because \(\im(g) = \ker(\tau)\) and \(\ker(\tau) = C\), \(\im(g) = C\), so \(g\) is surjective. Thus, \(A \overset{f}{\to} B \overset{g}{\to} C\) is short exact.
\end{proof}

It turns out, however, that we can make sense of chain complexes even outside the context of Abelian groups. Recall that the modules over the ring $\mathbb{Z}$ are precisely the Abelian groups, with the $\mathbb{Z}$-action defined as the group exponentiation (which in the context of Abelian groups is often written in terms of $\mathbb{Z}$-multiplication). Therefore, a reasonable conjecture would be that chain complexes also make sense over $R$-modules for any ring \(R\). This conjecture is perhaps informed by the fact that other important results about Abelian groups, like the classification of finitely-generated Abelian groups, also generalize to $R$-modules. And, in fact, that this conjecture is correct, and we can upgrade our definition from above as follows:

\begin{definition}[Chain Complexes of $R$-modules]
A \textbf{chain complex} $(\ch{C}, d)$ of modules over a ring \(R\) is a collection of $R$-modules \(\{C_i\}_{i \in \mathbb{Z}}\) together with $R$-module homomorphisms (often called \textbf{boundary morphisms}) \(d_i: C_i \to C_{i - 1}\) for each \(i \in \mathbb{Z}\) such that \(d_{i - 1} \circ d_i = 0, \forall i \in \mathbb{Z}\).
\end{definition}

\begin{remark}
Chain complexes over $R$-modules are \textit{incredibly} rich and interesting to study. In fact, the other sections of this paper will focus almost entirely on chain complexes over $R$ modules, so outside of this section whenever you see the term ``chain complex'', you should assume that we are discussing $R$-modules.
\end{remark}

When studying chain complexes, we are often interested in how they behave relative to other chain complexes, so we need a way to translate between chain complexes. We can do this by way of a chain complex map:

\begin{definition}
A \textit{chain complex map} $u: (\ch{C}, d) \to (\ch{D}, d')$ is a collection of $R$-module homomorphisms $\{u_i: C_i \to D_i\}_{i \in \mathbb{Z}}$ such that the following diagram commutes for each $n \in \mathbb{Z}$:
\[
\begin{tikzcd}
\ldots{} \arrow[r, "d_{n + 2}"]
& C_{n + 1} \arrow[r, "d_{n + 1}"] \arrow[d, "u_{n + 1}"]
& C_n \arrow[r, "d_n"] \arrow[d, "u_n"]
& C_{n - 1} \arrow[r, "d_{n - 1}"] \arrow[d, "u_{n - 1}"]
& \ldots{} \\
\ldots{} \arrow[r, "d'_{n + 2}"]
& D_{n + 1} \arrow[r, "d'_{n + 1}"]
& D_n \arrow[r, "d'_n"]
& D_{n - 1} \arrow[r, "d'_{n - 1}"]
& \ldots{} \\
\end{tikzcd}
\]
\end{definition}

Now that we have a notion of a map between chain complexes, one would hope that these maps are well-behaved. Indeed, one can check that the identity morphism between chain complexes is the morphism which is the identity morphism at each level, there is an associative composition formed by the element-wise compositions of each $R$-module homomorphism $u_i: C_i \to D_i$, and composition respects identities in the proper way.

At this point, we have defined most of the tools necessary for the study which will be undertaken in the rest of the paper. For the remaining portion of this section, we will explore more exotic chain complexes and conclude with a beautiful result that ties all of these different examples together into one all-encompassing definition.

Effectively, using our chain complex maps, it is possible to define a chain complex over chain complexes, analogously as above to be an ordered pair $(C_{\bullet, \bullet}, d)$, where $C_{n, \bullet} \in \Ch, \forall n \in \mathbb{Z}$ and $d_{n - 1} \circ d_n = 0, \forall n \in \mathbb{Z}$, and this chain complex is exact if $\im(d_n) = \ker(d_{n - 1}), \forall n \in \mathbb{Z}$. Here, $\Ch$ is used to denoted the category of chain complexes, making $C_{n, \bullet}$ one of its objects.

In this context, $0$ refers to the chain complex map $\{0: C_{n, i} \to 0\}_{i \in \mathbb{Z}}$, $\ker(d_n)$ refers to the chain complex formed by taking $\ker(d_{n,i})$ at each position $i$ and restricting and co-restricting $d_{n,i}: \ker(d_{n,i}) \to \ker(d_{n, i - 1})$ for each $i$. This restriction and co-restriction is well-defined because $\im(d_{n,i}) \subseteq \ker(d_{n, i - 1})$, so each element in the domain maps to an element in the codomain. Similarly, $\im(d_n)$ refers to the chain complex formed by taking $\im(d_{n, i + 1})$ at each position $i$ and restricting and co-restricting $d_{n, i}: \im(d_{n, i + 1}) \to \im(d_{n, i})$, which is well-defined because $\im(d_{n, i + 1}) \subseteq C_{n, i}$ and $d_{n, i}$ only maps to values in $\im(d_{n, i})$.

This definition seems unmotivated right now, and in fact we do not formally define what the kernel or the image of a chain complex is, but this is enough to prove the following result. One can check that our notion of kernels and images coincides with the more general notion defined later by satisfying the universal property.

\begin{proposition}
A sequence $0 \to \ch{A} \overset{f}{\to} \ch{B} \overset{g}{\to} \ch{C} \to 0$ of chain complexes is exact in $\Ch(\rmod{R})$ if and only if each sequence $0 \to A_n \overset{f_n}{\to} B_n \overset{g_n}{\to} C_n \to 0$ is exact in $\rmod{R}$. Here, $\Ch(\rmod{R})$ is used to denote the category of chain complexes over $R$-modules.
\end{proposition}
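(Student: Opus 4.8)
The plan is to unwind the definition of exactness of the sequence $0 \to \ch{A} \overset{f}{\to} \ch{B} \overset{g}{\to} \ch{C} \to 0$ in $\Ch(\rmod{R})$ into three separate conditions, one at each interior spot, and then invoke the fact recorded just above the statement that the kernel and image of a chain complex map are built position-wise. The zero object of $\Ch(\rmod{R})$ is the chain complex that is $0$ in every degree with all boundary maps zero; with this in hand, exactness at $\ch{A}$ reads $\ker f = 0$ (the zero subcomplex of $\ch{B}$, since $\im(0 \to \ch{A}) = 0$), exactness at $\ch{B}$ reads $\im f = \ker g$ as subcomplexes of $\ch{B}$, and exactness at $\ch{C}$ reads $\im g = \ch{C}$ (because $\ker(\ch{C} \to 0) = \ch{C}$).

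Next I would pass each of these to degree $n$. Since $(\ker g)_n = \ker(g_n)$ and $(\im f)_n = \im(f_n)$, and since a subcomplex of $\ch{B}$ is completely determined by the submodule it designates in each degree, equality, triviality, and fullness of subcomplexes can all be tested one degree at a time. This turns the three conditions into: $f_n$ is injective for all $n$; $\im(f_n) = \ker(g_n)$ for all $n$; and $g_n$ is surjective for all $n$.

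Finally I would assemble the pieces. Interchanging the order of the conjunction and the universal quantifier over $n$, the three conditions together are equivalent to the single statement that, for every $n$, the sequence $0 \to A_n \overset{f_n}{\to} B_n \overset{g_n}{\to} C_n \to 0$ is exact in $\rmod{R}$ --- either straight from the definition of a short exact sequence, or by appealing to the earlier proposition identifying short exact sequences of modules with exact chain complexes. Because every step is an ``if and only if,'' this proves both directions simultaneously.

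I expect the genuine content, as opposed to bookkeeping, to be confined to two points: that the position-wise kernels and images really are the kernels and images used in the definition of exactness in $\Ch(\rmod{R})$ (this is the universal-property verification already flagged in the excerpt, which I would take as given), and that two subcomplexes of $\ch{B}$ agreeing in every degree coincide. Once those are granted, the argument is entirely a matter of carefully identifying the zero object and the maps $0 \to \ch{A}$ and $\ch{C} \to 0$ and then reducing to the module-level definitions; I do not anticipate any real obstacle beyond that care.
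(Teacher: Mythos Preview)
Your proposal is correct and follows essentially the same route as the paper: reduce exactness of the sequence in $\Ch(\rmod{R})$ to the three conditions $\ker f = 0$, $\im f = \ker g$, and $\im g = \ch{C}$, then use the position-wise description of kernels and images to translate these into the corresponding conditions on $f_n$ and $g_n$ for every $n$. The paper's proof is a terse one-sentence version of exactly this chain of equivalences, while you have been more explicit about the zero object and about why degreewise equality of subcomplexes suffices.
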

\begin{proof}
Using the above definition of the kernel and image, the sequence of chain complexes is exact if and only if $\ker(f) = 0$, $\im(f) = \ker(g)$, and $\ker(g) = 0$, which happens if and only if for all $i \in \mathbb{Z}$ (i.e. at all positions in the respective chain complexes), $\ker(f_i) = 0$, $\im(f_i) = \ker(g_i)$, and $\ker(g_i) = 0$, which happens if and only if all of the individual sequences are exact.
\end{proof}

At first, it may seem surprising that we can form chain complexes of chain complexes, and that these are well-behaved in a similar way as chain complexes over Abelian groups or $R$-modules. It turns out that many other mathematical objects support chain complexes as well. And furthermore, these different ``chain complexes'' are not merely disparate structures with a common name, but they really fundamentally behave in very similar ways.

Because of these similarities, mathematicians began searching for the underlying structure that unites all of these ideas together. Like many results unifying the study of similar structures in different contexts, the true universal definition of a chain complex comes to us via Category Theory. Over the following definitions, we will build up to a characterization of the kinds of objects that support the notion of a chain complex. To start with, we need a rigorous way to talk about different ``kinds'' of objects and morphisms, like Abelian groups and their homomorphisms, chain complexes over $R$-modules and their chain complex maps. As was hinted before, Category Theory provides us with such a definition. To start with:

\begin{definition}
A \textbf{category} $\mathcal{C}$ consists of the following:
\begin{itemize}
    \item An unordered collection $\Ob(\mathcal{C})$ of \textbf{objects}.
    \item An unordered collection $\hom{\mathcal{C}}(A, B)$ of \textbf{morphisms} for each pair of objects $A, B$. An element $f \in \hom{\mathcal{C}}(A, B)$ is often denoted $f: A \to B$.
    \item A \textbf{composition operator} $\circ$ such that for any $f: A \to B$ and $g: B \to C$, $g \circ f: A \to C$.
\end{itemize}
such that the following properties hold:
\begin{itemize}
    \item $\circ$ is associative
    \item For each object $X \in \Ob(\mathcal{C})$ there exists a morphism $\id_X$ such that for all $f: X \to Y$, $f \circ \id_X = f$, and for all $f: W \to X$, $\id_X \circ f = f$.
\end{itemize}
\end{definition}

As a result, using the above, the proper definition of a category of chain complexes of $R$-modules can be concretely written as follows:

\begin{definition}
The \textbf{category of chain complexes of $R$-modules} $\Ch = \Ch(\rmod{R})$ is the category whose objects are chain complexes $(\ch{C}, d)$, whose morphisms are chain complex maps, and whose compositions are defined as above.
\end{definition}

One cam similarly define the categories $\Ab$ of Abelian groups and $\rmod{R}$ for modules over a ring $R$.

\begin{definition}[Additive Categories]
We say that a category $\mathcal{A}$ is \textbf{additive} if the following conditions hold:
\begin{enumerate}[(i)]
    \item Each hom-set $\hom{\mathcal{A}}(A, B)$ is an Abelian group, where the binary operation $+$ respects composition: i.e., for all morphisms $e \in \hom{\mathcal{A}}(A, B)$, $f, g \in \hom{\mathcal{A}}(B, C)$, and $h \in \hom{\mathcal{A}}(C, D)$, we have that $h \circ (f + g) \circ e = h \circ f \circ e + h \circ g \circ e$.
    
    \item $\mathcal{A}$ has an object $0$ such that for all other objects $X$, there exists unique morphisms $0 \to X$ and $X \to 0$, which we call the \textbf{zero element}.
    
    \item For every pair of objects $A$ and $B$ in $\mathcal{A}$, the product $A \times B$ is an object in $\mathcal{A}$.
\end{enumerate}
\end{definition}

An additive category gives us enough structure to define what kernels, cokernels, and images are:
\begin{definition}
In an additive category $\mathcal{A}$, given a morphism $f: B \to C$, we define the \textbf{kernel of $f$} (if it exists) to be a map $i: A \to B$ such that $f \circ i = 0$, and given any other map $i': A' \to B$ such that $f \circ i' = 0$, there exists a unique map $\phi: A' \to A$ such that $i' = i \circ \phi$. \\

\noindent Dually, given a morphism $f: B \to C$, we define the \textbf{cokernel of $f$} (if it exists) to be a map $e: C \to D$ such that $e \circ f = 0$, and given any other map $e': C \to D'$ such that $e' \circ f = 0$, there exists a unique map $\phi: D \to D'$ such that $e' = \phi \circ e$. \\

\noindent Finally, we define the \textbf{image of $f$} to be $\ker(\coker(f))$.
\end{definition}
The above definition (which is already an elaboration of the definition given by \cite{weibel}) is still rather incomplete on its own, and deserves to be unpacked further. First of all, when we talk about a zero morphism $0: A \to B$, we really mean the unique morphism $A \to 0$ composed with the unique morphism $0 \to B$. In the categories we are used to working with, this corresponds to the $0$ homomorphism we are familiar with.

A careful observer, however, might argue that this definition still does not match the definition usually given for a kernel or cokernel in $\rmod{R}$. In $\rmod{R}$, there is a clear notion of a ``subobject'', so we can understand $\ker(f)$ as a subobject of the domain. If we let $\mathcal{K}$ indicate this object, in the above definition $\ker(f)$ is really the $R$-module homomorphism $\mathcal{K} \injectsto A$. For arbitrary additive categories, it makes sense to define the kernel to be this morphism rather than the object because we don't always have a convenient notion of a ``subobject'', so $A$ is not necessarily a ``subobject'' of $B$ in the sense we would want it to be.

This leaves the definition of the image to be demystified. The following proposition clarifies somewhat why this definition of the image coincides with our usual idea of the image of a map in $\rmod{R}$.

\begin{proposition}
In $\rmod{R}$, given a morphism $f: A \to B$, let the image of $f$ be denoted by $\mathcal{I}$. Then $\im(f) = \ker(\coker(f)): \mathcal{I} \to B$.
\end{proposition}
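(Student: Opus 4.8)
The plan is to compute $\coker(f)$ and then $\ker(\coker(f))$ explicitly in $\rmod{R}$, and to check that the resulting morphism is nothing but the inclusion $\mathcal{I} \hookrightarrow B$ of the ordinary submodule image $\mathcal{I} = \{f(a) : a \in A\}$. Throughout I use that $\rmod{R}$ is additive, so that the categorical definitions of kernel, cokernel, and image above apply.

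First I would identify the cokernel. I claim that the quotient map $e \colon B \to B/\mathcal{I}$ is a cokernel of $f$ in the categorical sense defined above: clearly $e \circ f = 0$, and given any $e' \colon B \to D'$ with $e' \circ f = 0$ we have $\mathcal{I} = \im(f) \subseteq \ker(e')$, so by the universal property of the quotient module $B/\mathcal{I}$ (the circle of ideas around the first isomorphism theorem, Theorem 1.1) $e'$ factors as $e' = \phi \circ e$ for a unique $R$-module homomorphism $\phi \colon B/\mathcal{I} \to D'$. Since a cokernel is unique up to unique isomorphism, it is harmless to work with this concrete model.

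Next I would compute $\ker(e)$. An element $b \in B$ lies in $\ker(e)$ if and only if $b + \mathcal{I} = \mathcal{I}$, i.e.\ $b \in \mathcal{I}$, so the underlying submodule of $\ker(e)$ is exactly $\mathcal{I}$. I would then verify that the inclusion $i \colon \mathcal{I} \hookrightarrow B$ satisfies the universal property of the kernel: $e \circ i = 0$ holds by construction, and for any $i' \colon A' \to B$ with $e \circ i' = 0$ we have $\im(i') \subseteq \ker(e) = \mathcal{I}$, so $i'$ corestricts to a map $\phi \colon A' \to \mathcal{I}$ with $i' = i \circ \phi$; this $\phi$ is unique because $i$ is injective, hence a monomorphism. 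Therefore $\ker(\coker(f))$ is the inclusion $\mathcal{I} \hookrightarrow B$, which is precisely the usual image of $f$ regarded as a subobject of $B$.

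The one genuine subtlety — and the point I would be most careful to spell out — is the meaning of the equality sign. Kernels and cokernels are characterized by universal properties, so they are determined only up to a unique isomorphism compatible with their structure maps; thus ``$\im(f) = \ker(\coker(f))$'' should be read as ``the canonical inclusion $\mathcal{I} \hookrightarrow B$ represents the subobject $\ker(\coker(f))$ of $B$''. Concretely, any other model $D$ of $\coker(f)$ carries a canonical isomorphism $B/\mathcal{I} \overset{\sim}{\to} D$ commuting with the maps out of $B$, and this isomorphism identifies $\ker(B \to B/\mathcal{I})$ with $\ker(B \to D)$, so the resulting subobject of $B$ does not depend on the choices. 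Checking these compatibilities is the main (though entirely routine) obstacle; everything of substance reduces to the universal property of quotient modules.
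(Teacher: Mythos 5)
Your argument is correct and follows essentially the same route as the paper's proof: identify the cokernel of $f$ with the quotient map $B \to B/\mathcal{I}$ via the universal property of quotient modules, then verify that the inclusion $\mathcal{I} \hookrightarrow B$ satisfies the universal property of the kernel of that quotient map. Your closing remark on reading the equality as an identification of subobjects up to canonical isomorphism is a welcome clarification that the paper leaves implicit, but it does not change the substance of the argument.
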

\begin{proof}
Consider the canonical projection $\gamma: B \to B / \mathcal{I}$. Because $f(x) \in \mathcal{I}, \forall x \in A$, $(\gamma \circ f)(x) = \mathcal{I}, \forall x \in A$, so it is the 0 morphism. I claim that $\gamma$ is the cokernel of $f$. Let $g: B \to C$ such that $g \circ f = 0$. Because $g \circ f = 0$, we must have $g(i) = 0, \forall i \in \mathcal{I}$, because by definition for each $i \in \mathcal{I}$ there exists an $x \in A$ such that $f(x) = i$. Thus, $\mathcal{I} \subseteq \ker(g)$, so by the universal property of the quotient $\exists! \bar{g}$ such that $g = \bar{g} \circ \gamma$, so thus $\gamma$ satisfies the universal property of the cokernel.

Now, consider $\ker(\gamma)$. I claim that $\iota: \mathcal{I} \injectsto B$ is the kernel of $\gamma$. Let $i \in \mathcal{I}$. $(\gamma \circ \iota)(i) = \gamma(i) = \mathcal{I}$, so $\gamma \circ \iota = 0$. Now let $h: A \to B$ such that $\gamma \circ h = 0$. Because $\gamma \circ h = 0$, we must have $(\gamma \circ h)(x) = 0$ for all $x \in A$, which means that $\gamma(h(x)) = \gamma(x) = \mathcal{I}$, so $x \in \mathcal{I}, \forall x \in A$. This means that we can co-restrict $h$ to get a new $R$-module homomorphism $\hat{h}: A \to \mathcal{I}$ such that $h = \iota \circ \hat{h}$, and $\hat{h}$ is unique because $h$ and $\hat{h}$ map all of the same values, just with a different codomain. Thus, $\iota$ satisfies the universal property of the kernel.

Therefore, $\im(f) = \ker(\coker(f)): \mathcal{I} \to B$.
\end{proof}

Thus, the image $\mathcal{I}$ that we are used to dealing with is really just the object associated with the image as defined in a categorical sense, so the two definitions go hand-in-hand.

An additive category gives us all of the structure that we need in order to obtain chain complexes and check whether or not they are exact:
\begin{definition}[Chain Complexes in an Additive Category]
A \textbf{chain complex} $(\ch{A}, d)$ in some additive category $\mathcal{A}$ is a collection of objects \(\{A_i\}_{i \in \mathbb{Z}}\) where $A_i \in \mathcal{A}, \forall i \in \mathbb{Z}$, together with morphisms (often called \textbf{boundary morphisms}) \(d_i: A_i \to A_{i - 1}\) for each \(i \in \mathbb{Z}\) such that \(d_{i - 1} \circ d_i = 0, \forall i \in \mathbb{Z}\).
\end{definition}
\begin{definition}
A chain complex $(\ch{A}, d)$ in some additive category $\mathcal{A}$ is \textbf{exact} if $\im(d_i) = \ker(d_{i - 1}), \forall i \in \mathbb{Z}$.
\end{definition}

In practice, however, additive categories do not provide quite enough structure to be interesting to study, because we are not guaranteed that images and kernels exist, so we have no way of measuring whether or not a chain complex is exact. To remedy this, mathematicians restrict their focus to chain complexes over Abelian categories, where Abelian categories are categories which satisfy a few additional constraints\footnote{A full definition of an Abelian category can be obtained in \cite{weibel}, which includes some extra details that are not worth mentioning here.}, one of which is that for each morphism $f$, $\ker(f)$ and $\coker(f)$ exist in $\mathcal{A}$. This also guarantees that $\im(f)$ exists, because $\im(f)$ is constructed out of kernels and cokernels. Some examples of Abelian categories include $\Ab$ and $\rmod{R}$.

Finally, to return to the question of why $\Ch$ behaves so well, it all relies on the following theorem:

\begin{theorem}
$\Ch(\rmod{R})$ is an Abelian category. Furthermore, for any Abelian category $\mathcal{A}$, the chain complexes over $\mathcal{A}$ form a category $\Ch(\mathcal{A})$, and $\Ch(\mathcal{A})$ is Abelian.
\end{theorem}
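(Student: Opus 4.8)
The plan is to verify the axioms of an Abelian category for $\Ch(\mathcal{A})$ directly, working position-by-position and invoking the corresponding structure in $\mathcal{A}$. Since the excerpt's working definition of an Abelian category is ``additive, plus every morphism has a kernel and cokernel'' (with a footnote deferring the full list to \cite{weibel}), I would check exactly those conditions, and I would treat $\Ch(\rmod{R})$ as the special case $\mathcal{A} = \rmod{R}$, which we already know is Abelian.

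\medskip
\noindent\textbf{Step 1: Additivity.} First I would show $\Ch(\mathcal{A})$ is additive. For two chain complex maps $u, v: \ch{C} \to \ch{D}$, define $u + v$ by $(u+v)_i = u_i + v_i$, using the group structure on $\hom{\mathcal{A}}(C_i, D_i)$; one checks $(u+v)_i$ still commutes with the differentials because $d'_i \circ (u_i + v_i) = d'_i u_i + d'_i v_i = u_{i-1} d_i + v_{i-1} d_i = (u+v)_{i-1} \circ d_i$, using bilinearity of composition in $\mathcal{A}$. This makes $\hom{\Ch(\mathcal{A})}(\ch{C}, \ch{D})$ an Abelian group, and compatibility with composition is inherited levelwise. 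The zero object is the complex with $A_i = 0$ for all $i$ and zero differentials; uniqueness of maps in and out follows from uniqueness in $\mathcal{A}$ at each level. For products, take $(\ch{C} \times \ch{D})_i = C_i \times D_i$ with differential $d_i \times d'_i$, and check the universal property of the product levelwise.

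\medskip
\noindent\textbf{Step 2: Kernels and cokernels.} Given a chain complex map $u: \ch{C} \to \ch{D}$, I would build its kernel and cokernel levelwise. Let $K_i = \ker(u_i)$ with structure map $\kappa_i: K_i \to C_i$. The key verification is that the differential $d_i: C_i \to C_{i-1}$ restricts and co-restricts to a map $K_i \to K_{i-1}$: since $u_{i-1} \circ (d_i \circ \kappa_i) = d'_i \circ (u_i \circ \kappa_i) = d'_i \circ 0 = 0$, the universal property of $\ker(u_{i-1})$ in $\mathcal{A}$ produces a unique $\partial_i^K: K_i \to K_{i-1}$ with $\kappa_{i-1} \circ \partial_i^K = d_i \circ \kappa_i$. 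One then checks $\partial^K_{i-1} \circ \partial^K_i = 0$ (cancel the monic $\kappa_{i-2}$) so $(K_\bullet, \partial^K)$ is a chain complex, that $\kappa: K_\bullet \to \ch{C}$ is a chain map, and that it satisfies the universal property of the kernel in $\Ch(\mathcal{A})$ — again by assembling, at each level, the unique factorization guaranteed in $\mathcal{A}$ and checking the pieces fit into a chain map (the ``fitting'' being forced by uniqueness). The cokernel is entirely dual: $(\coker u)_i = \coker(u_i)$, with the induced differential coming from the universal property of cokernels in $\mathcal{A}$. Since $\im$ is defined as $\ker(\coker)$, images in $\Ch(\mathcal{A})$ then exist automatically, and one sees $(\im u)_i = \im(u_i)$.

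\medskip
\noindent\textbf{Step 3: Remaining Abelian axioms.} Here I would invoke the full definition from \cite{weibel}: one still needs, e.g., that every monic is the kernel of its cokernel and every epic is the cokernel of its kernel (equivalently, that the canonical map $\coim(u) \to \im(u)$ is an isomorphism). The argument is the same template: a chain map $u$ is monic iff each $u_i$ is monic (test against maps out of the zero complex shifted into position $i$, or argue directly), epic iff each $u_i$ is epic, and then the relevant factorization in $\Ch(\mathcal{A})$ is the levelwise factorization in $\mathcal{A}$, which is an isomorphism in each degree and hence an isomorphism of complexes. I expect this to be the main obstacle — not because any single step is hard, but because it requires knowing precisely which axioms \cite{weibel} uses and carefully transporting each through the ``do it levelwise, glue by uniqueness'' machine; the recurring subtlety is always the same, namely checking that the levelwise-defined objects and maps actually commute with differentials, which every time reduces to a diagram chase that the universal properties in $\mathcal{A}$ resolve. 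Finally, $\Ch(\rmod{R})$ being Abelian is immediate from the general statement once we recall $\rmod{R}$ is Abelian.
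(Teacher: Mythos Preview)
Your proposal is correct and follows the same levelwise strategy the paper uses: define addition of chain maps, the zero object, products, kernels, and cokernels degree-by-degree and then glue via universal properties. In fact you go further than the paper's own proof, which only spells out the Abelian-group structure on hom-sets and the zero object before deferring the existence of products and the remaining Abelian axioms to \cite{weibel}; your Steps~1--3 are precisely the content the paper chooses to omit.
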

\begin{proof}
We can define the morphisms between chain complexes in the same way as we defined them above for $\Ch(\rmod{R})$, which one can check defines a category with well-behaved identity morphisms and composition.

To see that $\Ch(\mathcal{A})$ is an additive category, observe that we can lift the binary operation $+$ for morphisms of $\mathcal{A}$ to $\Ch(\mathcal{A})$ by adding chain complex maps levelwise. This means that composition distributes over addition because it does so levelwise. $\Ch(\mathcal{A})$ also has a 0 object, which is given by the chain complex $\ldots{} \to 0 \to 0 \to \ldots{}$. The unique morphisms to and from any chain complex $\ch{A}$ are given by the chain maps $\{0: A_i \to 0\}_{i \in \mathbb{Z}}$ and $\{0: 0 \to A_i\}_{i \in \mathbb{Z}}$.

The proof that products of pairs exist in $\Ch(\mathcal{A})$ and the proof that $\Ch(\mathcal{A})$ is Abelian are less enlightening and require some more machinery from category theory than has been defined thus far, so the curious reader can refer to Weibel \autocite{weibel}.
\end{proof}

Equipped with a rigorous set of definitions, in the section that follows below, we will explore some interesting applications of chain complexes, and how these help encode information about topological shapes.

\section{Homology}
\label{sec:andy}
The origins of homology lie in the study of geometries, and realizing, in particular, that their topologies can be differentiated from one another and be classified by examining their holes. This is, for instance, what makes a Boston creme doughnut different from a regular glazed doughnut, at least in topological terms. But to understand a shape's holes, one must first study the holes' $boundaries$, in all possible dimensions, including in 0D (points), 1D (edges), and 2D (faces). For example, when looking at a solid 4-face pyramid in 3D, we can define its boundary as the hollow pyramid of four 2D faces. This definition might not seem intuitive at first, although a useful analogy is to think of wrapping the geometry with paper, and considering the wrap as the boundary. Similarly, when looking at each of these four 2D faces that make the boundary of the 3D pyramid, we find that each of these are in turn bounded by three 1D edges (again, enclosing the faces); and each one of these 1D edges are bounded by two 0D points each! This example\footnote{Without delving too deep into concepts of topology, this example helps demonstrate what simplices are, generalizations of 2D triangles to multiple dimensions. In essence, a 0-simplex is a point, a 1-simplex is a line, a 2-simplex is a triangle, and so on. The reason why studying triangles is so important in the field of topology is due to triangulation; Hungarian mathematician Tibor Rado demonstrated that essentially most 2D surfaces can be constructed, or triangulated, using 2-simpleces. This result, for which an analogous version exists in 3D, is extremely useful in a variety of disciplines, such as in meshing applications for finite element analysis. More on this topic can be seen in \cite{tom}, alongside additional fascinating facts such as Euler's formula.} illustrates two crucial points about the study of the topologies of geometries. The first is that the dimension of the boundary is lower than that of the studied shape itself. The second, and more important point, is that the study of boundaries provides enough information to create building instructions for any shape. 

Incredibly, it is precisely chain complexes that allow us to take this information about a shape's boundaries in all possible dimensions and construct its building instructions. This would permit obtaining a sequence of maps such as:
\[
Points\overset{\psi_1}{\rightarrow}Edges\overset{\psi_2}{\rightarrow}Faces\overset{\psi_3}{\rightarrow}Volume
\]

In the pyramid example from above, we had a sequence relating four points, six edges, four faces and one volume. Notice that this sequence was limited to dimensions we were familiar with (0D, 1D, 2D, 3D), but we can continue up to an arbitrary number of dimensions, where visualizing boundaries is certainly not trivial. 

Now, the question is, how do we relate boundaries and chain complexes to holes and homologies? Under this line of thought, following the example provided by Kelsey Houston \autocite{quanta}, holes are defined by ``not possessing boundaries", or in other words, arise whenever we have a fully closed hollow boundary/loop. As an example, if we had 3 0D points but just 2 1D line segments connecting these points, then adding a third 1D line segment would close the loop, creating a hollow triangle. This gives rise to a hole, given that the boundary of each segment (the 0D points) disappear. As a result, chain complexes, by storing information about the boundaries of shapes in each dimension, provide a framework from which holes can be determined, by examining closely each boundary.

Homologies, or more precisely homology modules, provide precisely the means through which we can use chain complexes to calculate the number of holes at each dimension. And not surprisingly, these define the core of homological algebra, the topic of this paper, which as we know, is the branch of mathematics that acts as the translator between the world of shapes and topologies and the more computable world of algebra.

To construct homology modules, consider the following definition:
\begin{definition}
Let $(\ch{C}, d)$ be a chain complex, with maps $d_n:C_n\rightarrow C_{n-1}$, as shown below:
\[...{\rightarrow}C_{n+1}\overset{d_{n+1}}{\rightarrow}C_{n}\overset{d_n}{\rightarrow}C_{n-1}\overset{d_{n-1}}{\rightarrow}C_{n-2}\overset{d_{n-2}}{\rightarrow}...\text{ .}\]

Then, define $Z_n(C) = \ker(d_n)$ as the module of \textbf{n-cycles} of $C$ and $B_n(C) = \im(d_{n+1})$ as the module of \textbf{n-boundaries} of $C$. The $\mathbf{n^{th}}$ \textbf{homology module} of the chain complex $C$ is then defined by the quotient of the $n^{th}$-cycle and $n^{th}$-boundary of $C$, or more precisely $H_n(C) = Z_n(C)/B_n(C) = \ker(d_n)/\im(d_{n+1})$

Notice that $\forall n \in \mathbb{Z}$, we have that $0 \subseteq B_n(C) \subseteq Z_n(C) \subseteq C_n$. As a result, a morphism $u:C\rightarrow D$ between two chain complexes $C$ and $D$ would need to preserve these structures, and map, $\forall n \in \mathbb{Z}$, the $n^{th}$-boundaries of $C$ to $n^{th}$-boundaries of $D$, the $n^{th}$-cycles of $C$ to the $n^{th}$-cycles of $D$, and most importantly, would need to map the  $n^{th}$-homology groups of $C$ to those of $D$. One such morphism $u:C\rightarrow D$ is called a \textbf{quasi-isomorphism} if the maps between homology modules, ${\phi_n}:H_n(C)\rightarrow H_n(D)$, are all isomorphisms. 
\end{definition}
To illustrate this definition, consider the following proposition. 

\begin{proposition}
For every chain complex $(\ch{C}, d)$, we have that the following are equivalent:\begin{enumerate}[(i)]
    \item $C$ is exact, that is, at every $C_n$;
    \item $C$ is acyclic, that is, $H_n(C) = 0$, $\forall n \in \mathbb{Z}$;
    \item The map $u:0\rightarrow C$ is a quasi-isomorphism, where $0$ represents the chain complex composed of zero modules and zero maps.
\end{enumerate}
\end{proposition}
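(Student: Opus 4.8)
The plan is to prove the two equivalences $(i) \iff (ii)$ and $(ii) \iff (iii)$ separately, each of which amounts to unwinding the relevant definitions.

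For $(i) \iff (ii)$, recall from the earlier proposition that $d_{n-1} \circ d_n = 0$ forces $\im(d_{n+1}) \subseteq \ker(d_n)$, so $H_n(C) = \ker(d_n)/\im(d_{n+1}) = Z_n(C)/B_n(C)$ is a genuine quotient module for every $n$. The key elementary observation is that a quotient module $A/B$ with $B \subseteq A$ is the zero module if and only if $A = B$. Applying this with $A = Z_n(C)$ and $B = B_n(C)$ gives $H_n(C) = 0 \iff \ker(d_n) = \im(d_{n+1})$, which is precisely exactness of $C$ at $C_n$. Since this holds for each $n$, exactness at every position (that is, $C$ exact) is equivalent to $H_n(C) = 0$ for all $n \in \mathbb{Z}$ (that is, $C$ acyclic).

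For $(ii) \iff (iii)$, I would first compute the homology of the zero chain complex $0$: every module in it is the zero module and every boundary map is the zero map, so $Z_n(0) = \ker(0) = 0$ and $B_n(0) = \im(0) = 0$, whence $H_n(0) = 0$ for all $n$. The chain complex map $u : 0 \to C$ then induces, for each $n$, a module homomorphism $\phi_n : H_n(0) \to H_n(C)$, i.e. a map $\phi_n : 0 \to H_n(C)$, which is necessarily the (unique) zero homomorphism. Such a map is an isomorphism exactly when its codomain $H_n(C)$ is itself the zero module. Hence $u$ is a quasi-isomorphism, meaning all $\phi_n$ are isomorphisms, if and only if $H_n(C) = 0$ for every $n$, which is statement $(ii)$. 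Chaining the two equivalences yields $(i) \iff (ii) \iff (iii)$.

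I do not anticipate a real obstacle: the whole argument is a definition chase. The only place warranting a moment of care is verifying that the induced map $\phi_n$ genuinely has domain $0$ and is the zero map — but this is immediate once $H_n(0) = 0$ is established, and the fact that chain maps induce well-defined homomorphisms on homology was already recorded when quasi-isomorphisms were defined.
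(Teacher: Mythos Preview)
Your proof is correct and matches the paper's argument in substance: both unwind the definitions, compute $H_n(0)=0$, and use that a quotient $A/B$ vanishes iff $A=B$ and that a map out of the zero module is an isomorphism iff its target is zero. The only cosmetic difference is that the paper arranges the implications as a cycle $(i)\Rightarrow(ii)\Rightarrow(iii)\Rightarrow(i)$ whereas you prove the two biconditionals $(i)\Leftrightarrow(ii)$ and $(ii)\Leftrightarrow(iii)$ directly.
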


\begin{proof}
$(i) \implies (ii)$ By assumption of $(i)$, $C$ being exact means, by definition, that $\im(d_{n+1}) = \ker(d_n)$ at every $n$, so then we automatically get that 

\noindent $\ker(d_n)/\im(d_{n+1})= H_n(C) = 0$ as well, making $C$ acyclic.

$(ii) \implies (iii)$ By assumption of $(ii)$, we have that for the chain complex $C$, $H_n(C) = 0$ for all the positions $n$. Furthermore, given that $0$ is the chain complex composed of zero modules and zero maps, then it must also follow that $H_n(0) = 0$ as well (this is not surprising, since $\im(d_{n+1}) = \ker(d_n) = 0$). As a result, this means that the maps between the homology modules, ${\phi_n}:H_n(0)\rightarrow H_n(C)$ are all trivial maps, mapping $0\rightarrow 0$. Thus, these are all isomorphisms, making the map $u:0\rightarrow C$ a quasi-isomorphism.

$(iii) \implies (i)$ By definition of quasi-isomophism, we have that all maps ${\phi_n}:H_n(0)\rightarrow H_n(C)$ must be isomorphisms. Hence, because of the structure of the chain complex $0$ and by assumption of $(iii)$, then it is clear that the mapping $H_n(0) = 0\rightarrow H_n(C)$ must be trivial too in order for it to be an isomorphism (we can't map $0$ to more than one element). Hence, $H_n(C)$ has to necessarily be equal to $0$ for all $n$, proving that the chain complex $C$ is exact. 
\end{proof}

Now that the main definition for homology modules has been provided, below we provide a computational example, which should clarify and illustrate how exactly we can use them.

\begin{example}
Recall the sequence from Example 1.2, which had the maps $\psi:\mathbb{Z}\rightarrow \mathbb{Z}$, mapping $n\in \mathbb{Z} \mapsto 2n\in \mathbb{Z}$, and $\varphi:\mathbb{Z}\rightarrow \mathbb{Z}/2\mathbb{Z}$, mapping $z\in \mathbb{Z}$ to its equivalence class in $\mathbb{Z}/2\mathbb{Z}$. This resulted in: 
\[
\mathbb{Z}\overset{\psi}{\rightarrow}\mathbb{Z}\overset{\varphi}{\rightarrow}\mathbb{Z}/2\mathbb{Z}
\]
As a result, if we have that $\im(\psi) = \ker(\varphi)$, then this means that an integer $k$ is divisible by 2 $\iff$ $k = 2q$, for some other integer $q$. In an more fundamental and philosophical level, the integers $k = 2q$ represent the things that can be constructed (in this case, with the map $\psi$), while the integers that are divisible by 2 are what we can test (in this case, through the map $\varphi$). Thus, the homology module of this sequence, $\ker(\varphi)/\im(\psi)$, captures precisely how successful we were in measuring what we wanted to test. If, for instance, $\psi$ mapped $n\in \mathbb{Z} \mapsto 4n\in \mathbb{Z}$, then the homology module would certainly be non-zero, since we are failing to capture certain multiples of 2 (like 2, 6, 10, ...).

Overall, this provides another way of understanding homologies that is not strictly related to topological spaces.
\end{example}

One interesting feature to mention about homology modules is that we can extend their definition to products and direct sums of chain complexes. To see this with more clarity, first consider the following definition:

\begin{definition}
Let $(\ch{{C_i}}, d_i)$ be a family of chain complexes of $R$-modules, where $i \in I$. Then, the \textbf{product} $\prod_{i \in I} C_i$ and \textbf{direct sum} (coproduct)  $\bigoplus_{i \in I} C_i$ both exist in $\Ch$ (the category of chain complexes), and are defined degree-wise. This means we have: 
\begin{align*}
\prod_{i \in I} d_{i,n}:\prod_{i \in I} C_{i,n}\rightarrow \prod_{i \in I} C_{i,n-1} && \bigoplus_{i \in I} d_{i,n}:\bigoplus_{i \in I} C_{i,n}\rightarrow \bigoplus_{i \in I} C_{i,n-1}
\end{align*}
where $d_{i,n}$ represent the maps that go between $C_{i,n}$ and $C_{i,n-1}$, for a fixed $i$. 
\end{definition}

Hence, this allow us to introduce the following proposition, which asserts that we can compute the homologies of products and direct sums of chain complexes.

\begin{proposition}
Products and directs sums of chain complexes commute with homology modules. In other words, we have that:
\begin{align*}
\prod_{i \in I} H_n(C_{i}) \cong H_n(\prod_{i \in I} C_{i}) &&
\bigoplus_{i \in I} H_n(C_{i}) \cong H_n(\bigoplus_{i \in I} C_{i})
\end{align*}
As can be seen, the power of this proposition lies in the fact that taking the homology module of a product or direct sum of chain complexes is the same, up to isomorphism, than taking the homology module of each chain complex, and then the product or direct sum. 
\end{proposition}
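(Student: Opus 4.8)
The plan is to reduce everything to three elementary facts about products and direct sums of $R$-modules, since homology is assembled out of kernels, images, and quotients, each of which behaves predictably under $\prod$ and $\bigoplus$ in $\rmod{R}$.

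First I would unwind the definitions. By the preceding definition, the boundary map of $\prod_{i\in I} C_i$ in degree $n$ is precisely $\prod_{i\in I} d_{i,n}$ acting componentwise, and likewise for $\bigoplus_{i\in I} C_i$. Hence $H_n\bigl(\prod_i C_i\bigr) = \ker\bigl(\prod_i d_{i,n}\bigr)/\im\bigl(\prod_i d_{i,n+1}\bigr)$, and similarly for the direct sum. So it suffices to establish: (a) $\ker$ commutes with $\prod$ and with $\bigoplus$; (b) $\im$ commutes with $\prod$ and with $\bigoplus$; and (c) for any family of submodules $B_i \subseteq A_i$, the canonical componentwise map induces isomorphisms $\bigl(\prod_i A_i\bigr)/\bigl(\prod_i B_i\bigr) \cong \prod_i (A_i/B_i)$ and $\bigl(\bigoplus_i A_i\bigr)/\bigl(\bigoplus_i B_i\bigr) \cong \bigoplus_i (A_i/B_i)$.

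For (a), an element $(x_i)_i$ of $\prod_i C_{i,n}$ is killed by $\prod_i d_{i,n}$ if and only if $d_{i,n}(x_i) = 0$ for every $i$, i.e. if and only if $x_i \in \ker(d_{i,n})$ for all $i$; for the direct sum one simply imposes the additional finite-support condition, which is compatible with taking kernels componentwise. For (b), $(y_i)_i \in \im\bigl(\prod_i d_{i,n+1}\bigr)$ if and only if each $y_i$ admits a preimage under $d_{i,n+1}$, and assembling such preimages (invoking the axiom of choice in the product case) yields a preimage of $(y_i)_i$; in the direct sum case only finitely many $y_i$ are nonzero, so we may take the remaining preimages to be $0$ and remain inside $\bigoplus_i C_{i,n+1}$. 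For (c), the map $(a_i)_i \mapsto (a_i + B_i)_i$ is a well-defined surjective $R$-module homomorphism whose kernel is exactly $\prod_i B_i$ (respectively $\bigoplus_i B_i$), so Theorem 1.1 applies. Chaining these, $H_n\bigl(\prod_i C_i\bigr) = \ker\bigl(\prod_i d_{i,n}\bigr)/\im\bigl(\prod_i d_{i,n+1}\bigr) \cong \bigl(\prod_i \ker d_{i,n}\bigr)/\bigl(\prod_i \im d_{i,n+1}\bigr) \cong \prod_i \bigl(\ker d_{i,n}/\im d_{i,n+1}\bigr) = \prod_i H_n(C_i)$, and identically with $\bigoplus$ throughout.

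The only genuinely delicate point, and the step I would watch most carefully, is the finite-support bookkeeping in the direct-sum case: one must verify that kernels, images, and the quotient isomorphism all restrict correctly to finitely-supported tuples. This is exactly where the product and coproduct arguments diverge — for products the componentwise reasoning is unobstructed, whereas for direct sums each of (a), (b), (c) needs the small extra check that the relevant constructions preserve finiteness of support.
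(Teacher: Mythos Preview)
Your proof is correct and is essentially a concrete realization of what the paper's one-line proof gestures at: the paper simply asserts that the result ``follows from simply applying the universal property of products and direct sums, and utilizing the definition of the homology module,'' and your componentwise verification that $\ker$, $\im$, and quotients each commute with $\prod$ and $\bigoplus$ is precisely that claim made explicit. The only difference is granularity---the paper leaves everything to the reader, whereas you supply the actual checks (including the finite-support bookkeeping for $\bigoplus$ and the appeal to choice for images under $\prod$).
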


\begin{proof}
The proof follows from simply applying the universal property of products and direct sums, and utilizing the definition of the homology module.
\end{proof}

To finalize the discussion and applications of homology module, we can introduce one last concept, which is that of long exact sequences of chain complexes. In essence, through the mechanism that will be described below, it is possible to take a (short exact) sequence of chain complexes, compute the homology module at each $n^{th}$ position of each chain, and obtain a sequence by concatenating all of these homology modules together. More formally said, consider the following theorem:

\begin{theorem}

Let $0\rightarrow A \overset{f}\rightarrow B \overset{g}\rightarrow C \rightarrow 0 $ be a short exact sequence of the chain complexes $(\ch{A}, d_A)$, $(\ch{B}, d_B)$ and $(\ch{C}, d_C)$. Then, we naturally obtain maps $\partial_n:H_n(C)\rightarrow H_{n-1}(A)$ (between the homology at the $n^{th}$ position of $C$ and the $(n-1)^{th}$ position of $A$), which are known as connecting homomorphisms. This results in the following long exact sequence of homology module:

\[\begin{tikzcd}
	\ldots & H_{n+1}(A) & H_{n+1}(B) \arrow[d, phantom, ""{coordinate, name=Y}] & H_{n+1}(C)&\\
	& H_n(A) & H_n(B) \arrow[d, phantom, ""{coordinate, name=Z}] & H_n(C)&\\
	& H_{n-1}(A) & H_{n-1}(B) & H_{n-1}(C) & \ldots
	\arrow[from=1-1, to=1-2]
	\arrow["f", from=1-2, to=1-3]
	\arrow["g", from=1-3, to=1-4]
	\arrow[from=1-4, to=2-2,
	rounded corners,
to path={ -- ([xshift=2ex]\tikztostart.east) \tikztonodes
|- (Y) [near end]
-| ([xshift=-2ex]\tikztotarget.west)
-- (\tikztotarget)},
"\partial_{n + 1}"
	]
	\arrow["f", from=2-2, to=2-3]
	\arrow["g", from=2-3, to=2-4]
	\arrow[from=2-4, to=3-2,
rounded corners,
to path={ -- ([xshift=2ex]\tikztostart.east)\tikztonodes
|- (Z) [near end]
-| ([xshift=-2ex]\tikztotarget.west)
-- (\tikztotarget)},
"\partial_n"
	]
	\arrow["f", from=3-2, to=3-3]
	\arrow["g", from=3-3, to=3-4]
	\arrow[from=3-4, to=3-5]
\end{tikzcd}\]

\end{theorem}

The key to proving this theorem lies in both demonstrating the existence and defining the structure of these connecting homomorphism maps $\partial_n$, since these essentially permit the realization of the sequence of homology module shown above. This task is accomplished by a famous lemma known as the ``Snake Lemma\footnote{The connecting homomorphism $\partial$ from $\ker(c)$ to $\coker(a)$, taking the shape of a snake, is what gives the lemma its nickname, as is seen in the diagram below.}", whose statement is introduced below. 

\begin{lemma}
[Snake Lemma] Consider the following commutative diagram of the R-modules $M_1$, $M_2$, $M_3$ and $M_1'$, $M_2'$, $M_3'$:
\[\begin{tikzcd}
{}
& M_1 \arrow[r, "f"] \arrow[d, "a"]
& M_2 \arrow[r, "g"] \arrow[d, "b"]
& M_3 \arrow[d, "c"] \arrow[r]
& 0
\\
0 \arrow[r]
& M_1' \arrow[r, "f'"] 
& M_2' \arrow[r, "g'"] 
& M_3' 
& {}
\end{tikzcd}\]

If we have that the two rows in the diagram are exact, then we obtain the following exact sequence:
\[
\ker(a) \rightarrow \ker(b)\rightarrow \ker(c) \overset{\partial}\rightarrow \coker(a) \rightarrow \coker(b)\rightarrow \coker(c)
\]

\end{lemma}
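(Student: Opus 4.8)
The plan is to prove the Snake Lemma by the classical diagram-chasing argument, working entirely with elements of the $R$-modules (which is legitimate here since we are stated to be in $\rmod{R}$, not an arbitrary abelian category). The first step is to construct the four ``easy'' maps. The maps $\ker(a) \to \ker(b)$ and $\ker(b) \to \ker(c)$ are obtained by restricting $f$ and $g$: if $x \in \ker(a) \subseteq M_1$, then commutativity gives $b(f(x)) = f'(a(x)) = f'(0) = 0$, so $f(x) \in \ker(b)$, and similarly for $g$. Dually, the maps $\coker(a) \to \coker(b)$ and $\coker(b) \to \coker(c)$ are induced by $f'$ and $g'$ on the quotients: one checks $f'$ sends $\im(a)$ into $\im(b)$ so it descends to $M_1'/\im(a) \to M_2'/\im(b)$, and similarly for $g'$. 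I would then verify exactness at $\ker(b)$ (i.e.\ $\im(\ker(a)\to\ker(b)) = \ker(\ker(b)\to\ker(c))$) using exactness of the top row at $M_2$, and exactness at $\coker(b)$ using exactness of the bottom row at $M_2'$; these are short, routine chases.

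The heart of the proof, and the main obstacle, is the construction of the connecting homomorphism $\partial: \ker(c) \to \coker(a)$ and the verification that it is well-defined. Given $z \in \ker(c) \subseteq M_3$, since $g$ is surjective (top row exact at $M_3$) choose $y \in M_2$ with $g(y) = z$. Then $g'(b(y)) = c(g(y)) = c(z) = 0$, so $b(y) \in \ker(g') = \im(f')$ (bottom row exact at $M_2'$), and since $f'$ is injective (bottom row exact at $M_1'$) there is a unique $w \in M_1'$ with $f'(w) = b(y)$. Define $\partial(z) = w + \im(a) \in \coker(a)$. The key thing to check is independence of the choice of $y$: if $g(y_1) = g(y_2) = z$, then $y_1 - y_2 \in \ker(g) = \im(f)$, say $y_1 - y_2 = f(t)$, and then $f'(w_1 - w_2) = b(y_1) - b(y_2) = b(f(t)) = f'(a(t))$, so by injectivity of $f'$ we get $w_1 - w_2 = a(t) \in \im(a)$, hence the class of $w$ in $\coker(a)$ is unchanged. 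That $\partial$ is an $R$-module homomorphism follows because all the choices can be made $R$-linearly (or by a second short chase).

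The remaining step is to establish exactness at the two middle spots involving $\partial$, namely at $\ker(c)$ and at $\coker(a)$. For exactness at $\ker(c)$: if $z = g(y)$ with $y \in \ker(b)$ then we may take $w = 0$, so $\partial$ vanishes on the image of $\ker(b) \to \ker(c)$; conversely if $\partial(z) = 0$ then the chosen $w$ lies in $\im(a)$, say $w = a(t)$, and replacing $y$ by $y - f(t)$ gives a preimage of $z$ lying in $\ker(b)$. For exactness at $\coker(a)$: one checks that the composite $\ker(c) \overset{\partial}{\to} \coker(a) \to \coker(b)$ sends $z$ to the class of $b(y)$ modulo $\im(b)$, which is $0$; conversely, given $w + \im(a)$ mapping to $0$ in $\coker(b)$, i.e.\ $f'(w) \in \im(b)$, say $f'(w) = b(y)$, then $z := g(y)$ satisfies $c(z) = c(g(y)) = g'(b(y)) = g'(f'(w)) = 0$ so $z \in \ker(c)$, and by construction $\partial(z) = w + \im(a)$. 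Assembling the four easy maps, $\partial$, and these exactness verifications yields the desired six-term exact sequence. I expect the bookkeeping of ``which exactness hypothesis is used where'' to be the only real subtlety; everything else is mechanical.
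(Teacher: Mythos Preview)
Your proof is correct and complete: it is the standard diagram-chasing argument for the Snake Lemma in $\rmod{R}$, and each step (construction of the restricted and induced maps, construction and well-definedness of $\partial$, and exactness at all four interior positions) is handled properly. In fact you have done more than the paper does: the paper's own ``proof'' simply states that the argument proceeds by diagram chasing and then defers the details entirely to an external reference (Gardner), whereas you have actually written out that chase. Your formula $\partial = f'^{-1} b g^{-1}$ is exactly what the paper records just after the statement, so your approach coincides with what the paper intends.
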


Here, $\partial$ is precisely the connecting homomorphism mentioned in Theorem 3.1, and is given by $\partial = f'^{-1} b g^{-1}$. In addition, recall that $\coker(a) = M_1'/\im(a)$, $\coker(b) = M_2'/\im(b)$, and $\coker(c) = M_3'/\im(c)$. Now, to better understand the origins of this connecting homomorphism map $\partial$ and why it is defined in this way, it is best to look at the following diagram:
 \[\begin{tikzcd}
{}
& \ker(a) \arrow[r] \arrow[d, hook]
& \ker(b) \arrow[r] \arrow[d, hook, ""{coordinate, name=Z}]
& \ker(c) \arrow[d, hook] \arrow[dddll,
"\partial",
rounded corners,
to path={ -- ([xshift=2ex]\tikztostart.east) \tikztonodes
|- (Z) 
-| ([xshift=-2ex]\tikztotarget.west)
-- (\tikztotarget)}] 
\\
& M_1 \arrow[r, "f"] \arrow[d, "a"]
& M_2 \arrow[r, "g"] \arrow[d, "b"]
& M_3 \arrow[d, "c"] \arrow[r]
& 0
\\
0 \arrow[r]
& M_1' \arrow[r, "f'"] \arrow[d]
& M_2' \arrow[r, "g'"] \arrow[d]
& M_3' \arrow[d]
& {}
\\
{}
& \coker(a) \arrow[r] 
& \coker(b) \arrow[r] 
& \coker(c)
& {}
\end{tikzcd}\]

\begin{proof}
The proof for the Snake Lemma uses the concept of diagram chasing. However, this elaboration is quite involved, so we refer to the complete proof written by Bob Gardner \autocite{gardner}. 
\end{proof}

Now, notice that when we replace $M_1$, $M_2$, $M_3$ for the homology modules $H_n(A)$, $H_n(B)$, $H_n(C)$, and then replace $M_1'$, $M_2'$, $M_3'$ for the homology modules $H_{n-1}(A)$, $H_{n-1}(B)$, $H_{n-1}(C)$, this lemma automatically gives us the desired connecting homomorphism map $\partial_n:H_n(C) \rightarrow H_{n-1}(A)$. Here, recall that $A$, $B$ and $C$ were chain complexes part of the short exact sequence given by the assumption of Theorem 3.1. Thus, one can check that the diagram in the lemma commutes and that the rows are exact, due to this assumption.

One interesting result that follows from the application of Theorem 3.1 can be found below:

\begin{proposition}
Let $0\rightarrow A \overset{f}\rightarrow B \overset{g}\rightarrow C \rightarrow 0 $ be a short exact sequence of chain complexes $(\ch{A}, d_A)$, $(\ch{B}, d_B)$ and $(\ch{C}, d_C)$. Then, if any two of these chain complexes is exact, so is the third. 
\end{proposition}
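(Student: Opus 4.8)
The plan is to deduce this directly from the long exact sequence of homology modules produced by Theorem 3.1, combined with the equivalence established in the proposition preceding Example 3.2: a chain complex is exact if and only if it is acyclic, i.e.\ all of its homology modules vanish. Under that dictionary it suffices to show that if two of the families $H_\bullet(A)$, $H_\bullet(B)$, $H_\bullet(C)$ are identically zero, then so is the third, and this will follow from a short diagram chase in the long exact sequence.

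First I would apply Theorem 3.1 to the given short exact sequence $0 \to A \overset{f}{\to} B \overset{g}{\to} C \to 0$ to obtain
\[
\cdots \to H_n(A) \to H_n(B) \to H_n(C) \xrightarrow{\partial_n} H_{n-1}(A) \to H_{n-1}(B) \to \cdots .
\]
Then I would split into the three cases according to which pair of complexes is assumed exact, each time reading off exactness at one spot of a three-term window. If $A$ and $B$ are exact, then $H_n(A)=H_n(B)=0$ for all $n$, so exactness at $H_n(C)$ gives $\ker(\partial_n)=\im(H_n(B)\to H_n(C))=0$; hence $\partial_n$ is injective into $H_{n-1}(A)=0$, so $H_n(C)=0$. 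If $A$ and $C$ are exact, then $H_n(A)=H_n(C)=0$, and exactness at $H_n(B)$ gives $\ker(H_n(B)\to H_n(C))=\im(H_n(A)\to H_n(B))=0$; thus $H_n(B)$ injects into $H_n(C)=0$, so $H_n(B)=0$. If $B$ and $C$ are exact, then $H_n(B)=H_n(C)=0$, and exactness at $H_{n-1}(A)$ gives $\ker(H_{n-1}(A)\to H_{n-1}(B))=\im(\partial_n)=0$ (since $H_n(C)=0$); hence $H_{n-1}(A)$ injects into $H_{n-1}(B)=0$, so $H_{n-1}(A)=0$, and letting $n$ range over $\mathbb{Z}$ yields $H_m(A)=0$ for every $m$. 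In each case I would then invoke the exact-equals-acyclic equivalence in the reverse direction to conclude the third complex is exact.

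I do not expect a genuine obstacle: all of the substantive work — the Snake Lemma and the construction of the connecting homomorphisms $\partial_n$ — is already contained in Theorem 3.1, which has been cited. The only points needing care are the index bookkeeping and the observation that the three cases use three different three-term windows of the long exact sequence; everything else is the standard fact that in an exact sequence a map whose source or target is $0$ is itself $0$.
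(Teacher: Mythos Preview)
Your proposal is correct and follows essentially the same route as the paper: both invoke Theorem 3.1 to obtain the long exact sequence, use the exact-equals-acyclic equivalence from Proposition 3.1, and then observe that a term sandwiched between zeros in an exact sequence must vanish. The only cosmetic difference is that the paper handles a single case ``without loss of generality'' (relying on the periodic $A,B,C$ pattern of the long exact sequence), whereas you spell out all three cases explicitly; your version is arguably cleaner since the three cases are not symmetric on the nose.
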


\begin{proof}
 First, notice that by Theorem 3.1, we can construct a sequence of homology modules as given above that forms a long exact sequence. However, recall that by Proposition 3.1 above, if $A$ is a short exact sequence, then $A$ is acyclic, so $H_n(A) = 0$, $\forall\ n \in \mathbb{Z}$. Thus, if two of the chain complexes making the short exact sequence were exact, then we would get that the long exact sequence of homology modules would be composed of $0s$ in two-thirds of all positions. Denoting this long exact sequence as $L$, then: 
\[L = \begin{cases}
H_n(C) \text{, if } n \equiv 0 \mod 3 \\
0 \text{, otherwise}
\end{cases}\]

Graphically, this is illustrated as follows:
 \[... \rightarrow 0 \rightarrow H_{n+1}(C) \rightarrow 0 \rightarrow 0 \rightarrow H_n(C) \rightarrow 0 \rightarrow 0 \rightarrow H_{n-1}(C) \rightarrow 0 \rightarrow ...\]
 Here, we are assuming without loss of generality that the chain complex $C$ is not (necessarily) exact, while $A$ and $B$ are. Thus, in order to show the exactness of $C$, then we would need $H_n(C) = 0$, $\forall n \in \mathbb{Z}$. To do so, we can make use of the fact that if we have a short exact sequence $0\rightarrow A \overset{f}\rightarrow B \rightarrow 0 $, then $f$ is an isomorphism. This is due to exactness at $A$ requiring the image of the zero map to be equal to the kernel of $f$, while exactness at $B$ enforcing the image of $f$ to be equal to the kernel of the zero map. As a result, applying this fact to our problem, by letting $B = 0$ and $A = H_n(C)$, we get that due to $f$ being an isomorphism, then $A$ must be equal to $0$. Thus, $H_n(C) = 0$, $\forall n \in \mathbb{Z}$, showing that the chain complex $C$ had to be exact to begin with. 
 
\end{proof}

Overall, this section allowed us to investigate and understand the importance that homological algebra has in the study of topologies, through chain complexes and homology modules. In essence, we can differentiate between topological spaces/shapes by studying their boundaries and their respective number of holes in all possible dimensions, whether that is in 0D, 1D, 2D or beyond. Now, in the section that follows, we will continue building this algebraic toolbox so that we can classify shapes and objects in topological spaces in an even more robust way.

\section{Homotopy}
\label{sec:homotopy}

One other topic within the realm of topology whose study is of great interest is that of homotopies. In order to motivate the study of homotopies from an algebraic perspective, consider the following example, inspired from the explanation given by Nick Alger \autocite{mathstack}.

Suppose we have a malleable topological object $A$ in a topological space\footnote{For the purposes of this example, a topological space $B$ simply is the environment where the object $A$ lives, without going into further details nor technicalities.} $B$. To make this image more concrete, think of the object $A$ as a dough of Play-Doh, which can take many different configurations in space. Now, in order to fully define the particular configuration of the dough of Play-Doh in space, we would need to know the position of all the points of $A$ in $B$. In other words, we need a mapping $f:A \rightarrow B$. Now, if we are to change the Play-Doh's configuration, such as by introducing a dent, we can be even more specific with our data-keeping by defining a map $h:A\times I \rightarrow B$, where $I$ stores all the times $t$ for which this deformation takes place. In particular, we can let $I = [0, T] \subseteq\mathbb{R}$, where $T$ is simply the time when we stop the deformation; then $h(a,t)$, for $a \in A$, $t \in I$, essentially describes the Play-Doh's evolving configuration at a particular snapshot of time $t$. 

Now, the question that we can ask is, for two configurations of the Play-Doh dough $A$ in the topological space $B$, $f:A \rightarrow B$ and $g:A \rightarrow B$, is it possible to continuously transform the configuration defined by $f$ into that defined by $g$, through the time snapshots described by $h(a,t)$? This question is precisely the same as asking whether there exists a homotopy $h$ between the configurations $f$ and $g$. 

In essence, we say that two continuous functions, $f$ and $g$, between topological spaces are homotopic if we can continuously transform/deform one into the other one.  Now, just as with holes in topologies, it is possible to understand homotopies from an algebraic perspective, also using chain complexes. To do so, consider the following constructions.

Let $(\ch{C}, d)$ be a chain complex of vector spaces over a field $\mathbb{F}$. Then,
recall that $\forall n \in \mathbb{Z}$, we had that $0 \subseteq B_n(C) \subseteq Z_n(C) \subseteq C_n$, where for $d_n: C_n \rightarrow C_{n-1}$, $Z_n(C) = \ker(d_n)$ and $B_n(C) = \im(d_{n+1})$. Looking individually at each one of the $n^{th}$ positions of the chain complex, we can construct the following two exact sequences:
\[0 \rightarrow Z_n \rightarrow C_n \rightarrow C_n/Z_n \rightarrow 0\]
\[0 \rightarrow d_{n+1}(C_{n+1}) = B_n \rightarrow Z_n \rightarrow Z_n/d_{n+1}(C_{n+1}) = Z_n/B_n \rightarrow 0\]

Here, we can understand $Z_n$ and $B_n$ as vector sub-spaces of $C_n$ and $Z_n$ respectively. Thus, if we were to define $B_n'(C_n) = C_n/Z_n = d_n(C_n) = B_{n-1}$ and $H_n'(C) = Z_n/B_n = H_n(C)$, then we can create the following two compositions of vector spaces:
\[C_n \cong Z_n \bigoplus C_n/Z_n = Z_n \bigoplus B_n' \] and
\[Z_n \cong B_n \bigoplus Z_n/B_n = B_n \bigoplus H_n' \]

While these structures might seem redundant, the point of them is that they provide a mechanism through which we can move along the chain complex. In particular, they allow us to define an even more important map, which is the splitting map, as can be seen below:

\begin{definition}
A \textbf{splitting map} $s_n: C_n \rightarrow C_{n+1}$ is a map such that we obtain the composition $d_n = d_n \circ s_{n-1} \circ d_n$. Now, a chain complex $C$ is \textbf{split} if we have these maps $s_n$ such that $d_n = d_n \circ s_{n-1} \circ d_n$ for all $n$. The chain complex $C$ is \textbf{split exact} if on top of the existence of the maps $s_n$, we also have that $C$ is an exact sequence (or equivalently, if it is acyclic).
\end{definition}

Notice that this definition makes sense, given that for $d_n: C_n \rightarrow C_{n-1}$, the composition $d_n \circ s_{n-1} \circ d_n$ is essentially performing the following series of mappings $C_n \overset{d_n}\rightarrow C_{n-1} \overset{s_{n-1}}\rightarrow C_n \overset{d_n}\rightarrow C_{n-1}$. In particular, $d_{n+1} \circ s_n$ and $s_{n-1} \circ d_n$ are projection maps from $C_n$ into $B_n$ and $B_n'$ respectively. As a result, the direct sum $d_{n+1} \circ s_n + s_{n-1} \circ d_n$ is an endomorphism of $C_n$, and one can check that the kernel of this map is precisely $H_n'$ or $H_n(C)$. Thus, another way of checking whether the chain complex $C$ is exact is by looking at whether $d_{n+1} \circ s_n + s_{n-1} \circ d_n$ is the identity map on the $C_n$. Overall, the reason as to why the exact sequence $C$ of modules (or vector spaces) splits over a field $\mathbb{F}$ is related to the fact that every module (or vector space) over $\mathbb{F}$ is free, and thus projective. This concept will be explored in greater detail in the next section of this work.

Now that we have provided the definitions from above, it is possible to understand homotopies from a homological algebra perspective. To do so, we will consider two chain complexes, $(\ch{C}, d_C)$ and $(\ch{D}, d_D)$, which as we know from the study of homologies, also describe topological spaces. Then, we can introduce the maps $s_n: C_n \rightarrow D_{n+1}$ and maps $f_n: C_n \rightarrow D_n$ between these chain complexes, such that the following diagram results:

\[\begin{tikzcd}
... \arrow[r]
& C_{n+1} \arrow[r, "d_{C,n+1}"] \arrow[d, "f_{n+1}"'] 
& C_n \arrow[r, "d_{C,n}"] \arrow[d, "f_n"] \arrow[dl, "s_n"]
& C_{n-1} \arrow[r] \arrow[d, "f_{n-1}"] \arrow[dl, "s_{n-1}"]
& ...
\\
... \arrow[r]
& D_{n+1} \arrow[r, "d_{D,n+1}"'] 
& D_n \arrow[r, "d_{D,n}"'] 
& D_{n-1} \arrow[r]
& ...
\end{tikzcd}\]

Now, consider the following important definition:

\begin{definition}
A chain map $f$ between the chain complexes $(\ch{C}, d_C)$ and $(\ch{D}, d_D)$ is \textbf{null homotopic} if there exists these maps $s_n: C_n \rightarrow D_{n+1}$ such that we get $f_n = d_{D,n+1} \circ s_n + s_{n-1} \circ d_{C,n}$, $\forall n \in \mathbb{Z}$, and the diagram commutes. Then we call the collection of maps $\{s_n\}$ the chain contraction of $f$. 
\end{definition}

With this, we are ready to define what it means to have a homotopy between topological spaces from a homological algebra perspective, or a homotopy between maps of chain complexes:

\begin{definition}
Any two chain maps $f$ and $g$ from the chain complex $(\ch{C},d_C)$ to $(\ch{D},d_D)$ are \textbf{chain homotopic} if the difference $f-g$ is null homotopic. That is, $f_n-g_n = d_{D,n+1} \circ s_n + s_{n-1} \circ d_{C,n}$. We denote the collection of maps $\{s_n\}$ the chain homotopy from $f$ to $g$. 
Finally, we say that $f: C \rightarrow D$ is a chain homotopy equivalence (otherwise known as a homotopism of chains) if there exists another chain complex map $q: D \rightarrow C$ such that $q \circ f$ and $f \circ q$ are the identity maps of $C$ and $D$ respectively. We write $C \simeq D$ to denote the homotopism.
\end{definition}

Notice that one can check that if $g$ is any chain complex map between $(\ch{C},d_C)$ and $(\ch{D},d_D)$, so will $g + d_{D,n+1} \circ s_n + s_{n-1} \circ d_{C,n}$, at every $n$. Finally, to conclude the discussion of chain homotopies, notice that we can re-write the diagram from above as follows by replacing the chain complex $D$ for $C$:

\[\begin{tikzcd}
... \arrow[r]
& C_{n+1} \arrow[r, "d_{C,n+1}"] \arrow[d, "f_{n+1}"'] 
& C_n \arrow[r, "d_{C,n}"] \arrow[d, "f_n"] \arrow[dl, "s_n"]
& C_{n-1} \arrow[r] \arrow[d, "f_{n-1}"] \arrow[dl, "s_{n-1}"]
& ...
\\
... \arrow[r]
& C_{C,n+1} \arrow[r, "d_{n+1}"'] 
& C_{C,n} \arrow[r, "d_n"'] 
& C_{n-1} \arrow[r]
& ...
\end{tikzcd}\]

 What is interesting about this diagram is that it tells us directly that a chain complex $C$ is split exact if the identity map on $C$ is null homotopic. In this case, the identity map would be given by the $f_n$, which is $d_{C,n+1} \circ s_n + s_{n-1} \circ d_{C,n}$. 
 
 Overall, this section presented the necessary machinery to understand homotopies of two functions between topological spaces from an algebraic perspective, through the use of chain complexes and splitting maps. In the following two sections, the focus will shift towards the mathematics behind the intrinsic beauty of homological algebra.

\section{Projective Resolutions}
\label{sec:kevin}
One very useful concept in homological algebra is that of resolutions. In essence, these are one specific type of the exact sequences encountered earlier, only that their elements are objects from an Abelian category. The importance of resolutions lies in the idea that they can be used to describe the structure of the objects from the Abelian category in question, in particular through the invariants native to these objects. In general terms, invariants denote those mathematical properties of objects that don't see a modification when applying an operation or transformation. For instance, the magnitude of a complex number remains unchanged regardless of whether we take the conjugate or not. 

To formalize our background on resolutions and to illustrate their use in describing structures, throughout this section, we focus our attention on the category of unital $R$-modules (modules that admit the multiplicative identity). And to start our discussion, we first recall the definition of free $R$-modules. 
\begin{definition}
Let $R$ be a ring; let $I$ be a set. The \textbf{free $R$-module on I} is $\bigoplus_{i\in I}R$, where addition and $R$-action are defined component wise. We denote this module as $R[I]$. If $I$ is empty, we set $R[I]=\{0\}$. 
\end{definition}
Embed $I$ in $R[I]$ with the map $\iota:I\rightarrow R[I]$ sending \[i\mapsto (x_j)_{j\in I}\] where $x_j=0$ for $j\neq i$, and $x_j=1$ when $j=i$. This is verifiably an injective map. One can then see that each element in $R[I]$, $(r_i)_{i\in I}$ , can be written $\sum_{i\in I}r_i\iota(i)$, a well defined sum since elements of direct sums are all but finitely zero. It is immediate from this perspective that the $\iota(i)$ are generators for $R[I]$. 

Now, it is possible to define an important quality of free $R$-modules, as given by the theorem below:

\begin{theorem}
(Universal property of free modules). For any $R$-module $M$ and map $f:I\rightarrow M$, there exists a unique homomorphism $\Tilde{f}:R[I]\rightarrow M$ such that $\Tilde{f}\circ \iota=f$.
\end{theorem}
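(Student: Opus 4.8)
The plan is to construct $\widetilde{f}$ explicitly using the generators $\iota(i)$, verify it is a well-defined $R$-module homomorphism, check that it satisfies $\widetilde{f}\circ\iota=f$, and finally show uniqueness follows from the fact that the $\iota(i)$ generate $R[I]$.

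First I would define the map on a general element. Recall that every element of $R[I]=\bigoplus_{i\in I}R$ has the form $(r_i)_{i\in I}$ with all but finitely many $r_i$ equal to zero, and that this element can be written uniquely as the finite sum $\sum_{i\in I}r_i\iota(i)$. So I set $\widetilde{f}\left((r_i)_{i\in I}\right)=\sum_{i\in I}r_i f(i)$, which is a well-defined element of $M$ because the sum is finite. The well-definedness here is essentially automatic because the representation $(r_i)_{i\in I}=\sum_i r_i\iota(i)$ is unique (the $\iota(i)$ form a basis), so there is no ambiguity to resolve — this is the step that looks like it needs care but actually does not, in contrast to the first isomorphism theorem proof earlier where well-definedness was the crux.

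Next I would verify that $\widetilde{f}$ is an $R$-module homomorphism: additivity follows since $\widetilde{f}\left((r_i)+(s_i)\right)=\widetilde{f}\left((r_i+s_i)\right)=\sum_i(r_i+s_i)f(i)=\sum_i r_if(i)+\sum_i s_if(i)$, and compatibility with the $R$-action follows similarly from $\widetilde{f}\left(r\cdot(s_i)\right)=\sum_i(rs_i)f(i)=r\sum_i s_if(i)$, using that both operations on $R[I]$ are defined componentwise. Then I check the factorization: $\left(\widetilde{f}\circ\iota\right)(i)=\widetilde{f}(\iota(i))$, and since $\iota(i)$ has $i$-th component $1$ and all others $0$, this equals $1\cdot f(i)=f(i)$, so $\widetilde{f}\circ\iota=f$.

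Finally, for uniqueness, suppose $g:R[I]\to M$ is another homomorphism with $g\circ\iota=f$. Then for any element $(r_i)_{i\in I}=\sum_i r_i\iota(i)$, using that $g$ is a homomorphism, $g\left(\sum_i r_i\iota(i)\right)=\sum_i r_i g(\iota(i))=\sum_i r_i f(i)=\widetilde{f}\left((r_i)_{i\in I}\right)$, so $g=\widetilde{f}$. I do not expect any real obstacle here — the only thing to be slightly careful about is handling the edge case $I=\emptyset$, where $R[I]=\{0\}$ and the unique homomorphism is the zero map, which trivially satisfies the property. The whole argument is a routine unwinding of the definition of direct sum together with the observation that the $\iota(i)$ are a generating set on which the behavior of any homomorphism is forced.
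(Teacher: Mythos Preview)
Your proof is correct and follows exactly the same approach as the paper: define $\widetilde{f}$ by $\sum_i r_i\iota(i)\mapsto \sum_i r_i f(i)$, check the factorization $\widetilde{f}\circ\iota=f$, and deduce uniqueness from the fact that the $\iota(i)$ generate. In fact you supply more detail than the paper does, since it explicitly leaves the homomorphism verification and the uniqueness argument to the reader.
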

\begin{proof}
Suppose $f$ is a map from $I$ to $M$. Define $\tilde{f}:R[I]\rightarrow M$ by \[\sum_{i\in I}r_i\iota(i)\mapsto \sum_{i\in I}r_if(i)\]
One sees that $\tilde{f}\circ \iota=f$; we leave it to the reader to verify that this is a homomorphism of $R$-modules, and that any other such homomorphism must be equivalent. 
\end{proof}
This theorem implies that one can define maps from a free module to any arbitrary $R$-module $M$ quite easily; simply assign its generators to \emph{any} elements of $M$, for suppose that $M$ is an arbitrary $R$-module, and each $\iota(i)$ is assigned to $m_i\in M$. Then, we have a set map $f:I\rightarrow M$ defined by $i\mapsto m_i$, and the universal property of free modules (Theorem 5.1) gives us a unique homomorphism $\tilde{f}:R[I]\rightarrow M$. This is a useful quality indeed. 
\begin{definition}
An $R$-module $P$ is said to be \textbf{projective} if for any surjective homomorphism $\varphi:M\rightarrow N$ between $R$-modules $M$ and $N$, and any homomorphism $f:P\rightarrow N$, there exists a lifting map $\tilde{f}:P\rightarrow M$ such that $\varphi\circ \tilde{f}=f$, i.e.,\[\begin{tikzcd}
	& M \\
	P & N
	\arrow["f", from=2-1, to=2-2]
	\arrow["\varphi"', two heads, from=1-2, to=2-2]
	\arrow["{\tilde{f}}", dashed, from=2-1, to=1-2]
\end{tikzcd}\]
\end{definition}

Having defined the notion of module projectiveness, we can introduce the proposition below, which highlights why studying free $R$-modules is of importance, especially in the context of resolutions:

\begin{proposition}
Every free module is projective. 
\end{proposition}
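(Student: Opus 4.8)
The plan is to combine the universal property of free modules (Theorem 5.1) with the surjectivity of the map $\varphi$ in the definition of projectivity. Let $R[I]$ be a free $R$-module, and suppose we are given a surjective homomorphism $\varphi: M \to N$ together with a homomorphism $f: R[I] \to N$. We need to produce a lift $\tilde{f}: R[I] \to M$ with $\varphi \circ \tilde{f} = f$.

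First I would restrict attention to the generators: recall that $R[I]$ is generated by the elements $\iota(i)$ for $i \in I$, so it suffices to say where each $\iota(i)$ goes. For each $i \in I$, consider $f(\iota(i)) \in N$. Since $\varphi$ is surjective, there exists some $m_i \in M$ with $\varphi(m_i) = f(\iota(i))$; here I would invoke the axiom of choice to select one such $m_i$ for every $i$ simultaneously. This defines a set map $g: I \to M$ by $i \mapsto m_i$.

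Next I would apply the universal property of free modules (Theorem 5.1) to the set map $g: I \to M$, obtaining a unique $R$-module homomorphism $\tilde{f}: R[I] \to M$ with $\tilde{f} \circ \iota = g$, i.e., $\tilde{f}(\iota(i)) = m_i$ for all $i$. It then remains to check that $\varphi \circ \tilde{f} = f$. Both sides are $R$-module homomorphisms out of $R[I]$, so by the uniqueness clause of the universal property it is enough to verify they agree on the generators $\iota(i)$: indeed $(\varphi \circ \tilde{f})(\iota(i)) = \varphi(m_i) = f(\iota(i))$ by our choice of $m_i$. Hence $\varphi \circ \tilde{f}$ and $f$ correspond to the same set map $I \to N$, so they are equal, and $\tilde{f}$ is the desired lift.

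There is no real obstacle here; the proof is essentially a two-line application of the definitions. The only subtlety worth flagging is the use of the axiom of choice to pick the preimages $m_i$ all at once when $I$ is infinite — this is harmless and standard, but it is the one non-formal ingredient. Everything else (that the assignment on generators extends to a homomorphism, and that two homomorphisms agreeing on generators are equal) is exactly what Theorem 5.1 provides.
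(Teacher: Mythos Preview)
Your proof is correct and follows essentially the same approach as the paper: pick preimages $m_i$ of the $f(\iota(i))$ via surjectivity of $\varphi$, assemble them into a set map $g:I\to M$, and lift via the universal property of $R[I]$. The only cosmetic difference is that the paper verifies $\varphi\circ\tilde{f}=f$ by an explicit computation on a general element $\sum r_i\iota(i)$, whereas you invoke the uniqueness clause of Theorem~5.1 to conclude the two homomorphisms agree---both are perfectly fine.
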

\begin{proof}
Let $R[I]$ be the free module on a set $I$, let $\varphi:M\rightarrow N$ be an epimorphism\footnote{Epimorphisms can be understood as the category theory abstraction of surjective maps. Formally written, from a categorical perspective, an epimorphism $\varphi$ is a morphism between objects $A$ and $B$ of category $X$ such that for $\forall C \in X$ and $\psi_1$, $\psi_2$ from $B$ to $C$, we have that $\psi_1 \circ \varphi$ = $\psi_1 \circ \varphi \implies \psi_1$ = $\psi_2$.}, and let $f:R[I]\rightarrow N$ be a homomorphism. For any $i\in I$, we have $f(\iota(i))\in N$. Since $\varphi$ is surjective, there exists an $m_i\in M$ such that $\varphi(m_i)=f(\iota(i))$. Define $g:I\rightarrow M$ by \[
i\mapsto m_i
\]By the universal property of free modules, there exists a unique homomorphism $\tilde{f}:R[I]\rightarrow M$ such that $\tilde{f}\circ \iota =g$. It is clear from construction that $\varphi\circ g=f\circ \iota$. Moreover, for any $\sum_{i\in I}r_i\iota(i)\in R[I]$, \begin{align*}
    \varphi(\tilde{f}\bigg(\sum_{i\in I}r_i\iota(i))\bigg)&=\varphi\bigg(\sum_{i\in I}r_i\tilde{f}(\iota(i))\bigg)\\
    &=\varphi\bigg(\sum_{i\in I}r_ig(i)\bigg)\\
    &=\sum_{i\in I}r_i\varphi(g(i))\\
    &=\sum_{i\in I}r_if(\iota(i))\\
    &=f\bigg(\sum_{i\in I}r_i\iota(i)\bigg)
\end{align*}Thus, $R[I]$ is projective. 
\end{proof}
Indeed, all free modules are projective, but notice the converse is not necessarily true. The following is an example of a non-free projective module. 
\begin{example}
Let $R=\mathbb{Z}/6\mathbb{Z}$. Then, $\mathbb{Z}/3\mathbb{Z}$ is a $\mathbb{Z}/6\mathbb{Z}$ module as $3\mid 6$; it is not free, but it is projective. To prove this, we use the fact that $\mathbb{Z}/6\mathbb{Z}\cong\mathbb{Z}/2\mathbb{Z}\oplus \mathbb{Z}/3\mathbb{Z}$. Let $\iota:\mathbb{Z}/3\mathbb{Z}\rightarrow \mathbb{Z}/2\mathbb{Z}\oplus \mathbb{Z}/3\mathbb{Z}$ be the embedding into the direct sum, and let $\pi:\mathbb{Z}/2\mathbb{Z}\oplus \mathbb{Z}/3\mathbb{Z}\rightarrow\mathbb{Z}/3\mathbb{Z}$ be the canonical projection. If $\varphi:M\rightarrow N$ is a surjective homomorphism and $f:\mathbb{Z}/3\mathbb{Z}\rightarrow N$ a homomorphism, then we have the following diagram. \[\begin{tikzcd}
	&& M \\
	{\mathbb{Z}/2\mathbb{Z}\oplus \mathbb{Z}/3\mathbb{Z}} & {\mathbb{Z}/3\mathbb{Z}} & N
	\arrow["f", from=2-2, to=2-3]
	\arrow["\varphi"',two heads ,from=1-3, to=2-3]
	\arrow["\pi", shift left=1, from=2-1, to=2-2]
	\arrow["\iota", shift left=1, from=2-2, to=2-1]
\end{tikzcd}\]
Since $\mathbb{Z}/6\mathbb{Z}$ is free as a $\mathbb{Z}/6\mathbb{Z}$-module, Proposition 5.1 implies that it is projective as well. Therefore, there exists a lifting map $g:\mathbb{Z}/2\mathbb{Z}\oplus \mathbb{Z}/3\mathbb{Z}\rightarrow M$ such that $\varphi\circ g=f\circ \pi$.
Define the homomorphism $\tilde{f}:\mathbb{Z}/3\mathbb{Z}\rightarrow M$ by $\tilde{f}=g\circ \iota$. Then, \begin{align*}
    \varphi\circ \tilde{f}&=\varphi\circ g\circ \iota\\
    &=f\circ \pi \circ \iota \\
    &=f
\end{align*}
Consequently, $\mathbb{Z}/3\mathbb{Z}$ is projective as a $\mathbb{Z}/6\mathbb{Z}$-module.
\end{example}
The proof of projectiveness in the above example hints at a more general relationship. Indeed, all direct summands of a free module are in fact projective, and the converse is true as well. However, some theory is needed in order to establish this correspondence. We proceed by first recalling Definition 4.1 in the context of short exact sequences, to then introduce two powerful propositions.
\begin{definition}
A short exact sequence \[
0\rightarrow A\overset{f}{\rightarrow}B\overset{g}{\rightarrow}C\rightarrow 0
\] is said to be \textbf{split exact} if there exists an $h:C\rightarrow B$ such that $gh=id_C$.
\end{definition}
\begin{proposition}
If the above sequence is split exact, then $B\cong A\oplus C$.
\end{proposition}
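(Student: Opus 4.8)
The plan is to prove the splitting lemma directly, by exhibiting an explicit isomorphism $\Phi: A \oplus C \to B$ built from the given data: the injection $f: A \to B$, the surjection $g: B \to C$, and the section $h: C \to B$ satisfying $gh = \id_C$. The natural candidate is $\Phi(a, c) = f(a) + h(c)$, which is clearly an $R$-module homomorphism since $f$, $h$, and addition in $B$ are. The work then consists of checking that $\Phi$ is bijective, and since we are in an Abelian category of $R$-modules, it suffices to check injectivity (trivial kernel) and surjectivity.

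First I would verify surjectivity. Given $b \in B$, consider $b - h(g(b))$. Applying $g$ gives $g(b) - g(h(g(b))) = g(b) - (gh)(g(b)) = g(b) - g(b) = 0$, so $b - h(g(b)) \in \ker(g) = \im(f)$ by exactness at $B$. Hence there exists $a \in A$ with $f(a) = b - h(g(b))$, and then $\Phi(a, g(b)) = f(a) + h(g(b)) = b$. Next I would verify injectivity. Suppose $\Phi(a, c) = f(a) + h(c) = 0$. Applying $g$ and using $gf = 0$ (exactness at $B$ gives $\im(f) = \ker(g)$, so $g \circ f$ is the zero map) together with $gh = \id_C$ yields $0 = g(f(a)) + g(h(c)) = 0 + c = c$, so $c = 0$. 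Then $f(a) = 0$, and since $f$ is injective (exactness at $A$), $a = 0$. Thus $\ker(\Phi) = 0$, so $\Phi$ is an isomorphism and $B \cong A \oplus C$.

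There is no serious obstacle here; the only thing to be careful about is invoking the right consequence of exactness at each spot — injectivity of $f$ at $A$, and $\im(f) = \ker(g)$ at $B$ (used both to get $gf = 0$ for injectivity and to solve $b - hg(b) = f(a)$ for surjectivity). One could alternatively phrase the argument as producing a retraction $r: B \to A$ with $rf = \id_A$ and invoking a general direct-sum decomposition criterion, but giving the explicit $\Phi$ and its inverse $b \mapsto (f^{-1}(b - hg(b)), g(b))$ is the cleanest self-contained route given the machinery available in the excerpt.
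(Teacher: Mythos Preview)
Your proof is correct and essentially identical to the paper's: both define the map $A \oplus C \to B$ by $(a,c) \mapsto f(a) + h(c)$ and verify bijectivity via the same kernel computation and the same $b - h(g(b)) \in \ker(g) = \im(f)$ trick for surjectivity. The only cosmetic difference is the order in which injectivity and surjectivity are checked.
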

\begin{proof}
Suppose that the sequence is split exact, so that there exists such an $h:C\rightarrow B$. Define $\varphi: A\oplus C\rightarrow B$ by \[(a,c)\mapsto f(a)+h(c),\] a verifiable homomorphism. We see that $\varphi$ is injective, for suppose that $\varphi(a,c)=0$. Then, $f(a)+h(c)=0$, implying that $g(f(a))+g(h(c))=0$. But, $g(h(c))=c$, and $g(f(a))=0$; thus, $c=0$. This means that $f(a)=0$, but since $f$ is injective, $a=0$. To prove surjectivity, let $b\in B$. Then, $b=(b-h(g(b)))+h(g(b))$. It follows that $g(b-h(g(b)))=g(b)-g(b)=0$, that is, $(b-h(g(b)))\in \ker(g)=\im(f)$. Hence, there exists an $a_0\in A$ such that $f(a_0)=(b-h(g(b)))$. One can then see that $\varphi(a_0,g(b))=b$, as desired. 
\end{proof}
\begin{proposition}
Every $R$-module is the homomorphic image of a free module. Hence, every $R$-module is the homomorphic image of a projective module.
\end{proposition}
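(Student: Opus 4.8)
The plan is to prove the first statement directly by constructing an explicit free module that surjects onto an arbitrary $R$-module $M$, and then to deduce the second statement as an immediate corollary using Proposition 5.1 (every free module is projective). The construction is the natural one: take $I = M$ itself as the indexing set, and form the free module $R[M] = \bigoplus_{m \in M} R$. The claim will be that the map induced by the identity function on generators is the desired surjection.

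Concretely, first I would define the set map $f : M \rightarrow M$ to be the identity, $m \mapsto m$. By the universal property of free modules (Theorem 5.1), there is a unique $R$-module homomorphism $\tilde{f} : R[M] \rightarrow M$ with $\tilde{f} \circ \iota = \mathrm{id}_M$, where $\iota : M \hookrightarrow R[M]$ is the canonical embedding sending each $m$ to the generator supported at $m$. Explicitly, $\tilde{f}$ sends $\sum_{m \in M} r_m \iota(m) \mapsto \sum_{m \in M} r_m m$, a finite sum since elements of the direct sum are all but finitely many zero. The second step is to check surjectivity: given any $m \in M$, we have $\tilde{f}(\iota(m)) = f(m) = m$, so $m \in \im(\tilde{f})$; hence $\tilde{f}$ is onto. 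This establishes that $M$ is the homomorphic image of the free module $R[M]$.

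For the second sentence, I would simply invoke Proposition 5.1: since $R[M]$ is free, it is projective, so $M$ is also the homomorphic image of a projective module — namely, the very same module $R[M]$, via the very same surjection $\tilde{f}$.

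Honestly, there is no serious obstacle here; the result is essentially a restatement of the universal property of free modules with $I = M$. The only point requiring a moment's care is the bookkeeping that $\tilde{f}$ is well-defined as a homomorphism, but this is exactly what Theorem 5.1 already guarantees, so it can be cited rather than reproven. If one wanted a slightly more economical indexing set one could instead take $I$ to be any generating set of $M$ (for instance a minimal one, when it exists), but using $I = M$ avoids any existence subtleties and keeps the argument fully general and self-contained.
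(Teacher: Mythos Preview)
Your proof is correct and follows essentially the same approach as the paper: both invoke the universal property of free modules on a generating set to produce a surjection onto $M$, then cite Proposition~5.1 for the projective statement. The only cosmetic difference is that the paper phrases the argument for an arbitrary generating set $A$ (noting explicitly that $A=M$ works), whereas you commit to $A=M$ from the outset---a choice you yourself flag in your final paragraph.
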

\begin{proof}
Let $A$ be the set of generators for $M$, an arbitrary $R$-module. $A$ exists, as we could let $A=M$. Consider the free module on $A$, $R[A]$; the universal property of free modules induces a map $\tilde{f}:R[A]\rightarrow M$, since we have an embedding $id_A:A\rightarrow M$ as well as $\iota:A\rightarrow R[A]$. It is necessarily true that $\tilde{f}\circ \iota=id_A$, so $A\subseteq \im(\tilde{f})$. Since $A$ generates $M$, we can write for any arbitrary $m\in M$, \begin{align*}
    m&=r_1a_1+...r_na_n\\
    &=r_1\tilde{f}(x_1)+...+r_n\tilde{f}(x_n)\\
    &=\tilde{f}(r_1 x_1+...+r_n x_n)
\end{align*}
\end{proof}

Having established these propositions, it is now possible to construct the following theorem, which greatly facilitates the task of finding projective modules.

\begin{theorem}
The following conditions are equivalent for an $R$-module $P$. \begin{enumerate}[(i)]
    \item $P$ is projective. 
    \item Every exact sequence \[
    0\rightarrow A\overset{f}{\rightarrow}B\overset{g}{\rightarrow}P\rightarrow 0
    \] is split exact.
    \item $P$ is a direct summand of a free module. 
\end{enumerate}
\end{theorem}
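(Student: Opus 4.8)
The plan is to prove the cycle of implications $(i) \implies (ii) \implies (iii) \implies (i)$, which is the standard way to establish an equivalence of three conditions and keeps each individual step short.

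\textbf{Step 1: $(i) \implies (ii)$.} Assume $P$ is projective and suppose $0 \to A \overset{f}{\to} B \overset{g}{\to} P \to 0$ is exact. Since $g$ is surjective, I can apply the defining lifting property of projectivity (Definition 5.3) to the identity map $\id_P : P \to P$ and the epimorphism $g : B \to P$: this yields a homomorphism $h : P \to B$ with $g \circ h = \id_P$. That is exactly the condition for the sequence to be split exact in the sense of Definition 5.5.

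\textbf{Step 2: $(ii) \implies (iii)$.} By Proposition 5.3, $P$ is the homomorphic image of some free module $R[I]$; let $g : R[I] \to P$ be the corresponding surjection. Set $A = \ker(g)$ and $f : A \hookrightarrow R[I]$ the inclusion, so that $0 \to A \overset{f}{\to} R[I] \overset{g}{\to} P \to 0$ is exact. By hypothesis $(ii)$ this sequence is split exact, and so Proposition 5.2 gives $R[I] \cong A \oplus P$. Hence $P$ is a direct summand of the free module $R[I]$.

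\textbf{Step 3: $(iii) \implies (i)$.} Suppose $R[I] \cong P \oplus Q$ for some $R$-module $Q$, with inclusion $\iota : P \to R[I]$ and projection $\pi : R[I] \to P$ satisfying $\pi \circ \iota = \id_P$. Given a surjection $\varphi : M \to N$ and a homomorphism $\alpha : P \to N$, form $\alpha \circ \pi : R[I] \to N$. Since free modules are projective (Proposition 5.1), there is a lift $g : R[I] \to M$ with $\varphi \circ g = \alpha \circ \pi$. Define $\tilde{\alpha} = g \circ \iota : P \to M$; then $\varphi \circ \tilde{\alpha} = \varphi \circ g \circ \iota = \alpha \circ \pi \circ \iota = \alpha$, so $P$ is projective. (This is precisely the argument already used in Example 5.2, now stated in general.)

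Essentially every step is a direct invocation of a result already proved in the section, so I do not anticipate a serious obstacle; the only point requiring slight care is Step 2, where one must remember to use Proposition 5.3 to \emph{produce} a short exact sequence with $P$ on the right before $(ii)$ can be applied — the hypothesis $(ii)$ speaks about all such sequences, but it is only useful once we have exhibited a concrete one whose middle term is free.
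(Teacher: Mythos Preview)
Your proof is correct and follows essentially the same cycle $(i)\Rightarrow(ii)\Rightarrow(iii)\Rightarrow(i)$ as the paper, invoking the same ingredients (Definition of projectivity for the first implication, Propositions 5.3 and 5.2 for the second, and the argument of the $\mathbb{Z}/3\mathbb{Z}$ example for the third); you even spell out the $(iii)\Rightarrow(i)$ step in full where the paper simply refers back to that example. A couple of label references are off by one (projectivity is Definition~5.2, split exactness is Definition~5.3, and the relevant example is Example~5.1), but the mathematics is the same.
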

\begin{proof}
$(i)\implies (ii)$ The map $g:B\rightarrow P$ is an epimorphism (surjective homomorphism), and we have the identity map $P\rightarrow P$. Thus, there exists an $h:P\rightarrow B$ such that $g\circ h=id_P$.

$(ii)\implies (iii)$ Let $A$ be a set of generators for $P$. We have by Proposition 5.3 the exact sequence \[
0\rightarrow \ker(g)\overset{id}{\rightarrow}R[A]\overset{g}{\rightarrow}P\rightarrow 0
\] implying by Proposition 5.2 that $P\cong \ker(g)\oplus R[A]$.

$(iii)\implies (i)$ This proof is structurally identical to the one given in Example 5.1. 
\end{proof}
To illustrate the power of this theorem in determining whether a module is projective or not, consider the following two examples.
\begin{example}
$\mathbb{Z}/2\mathbb{Z}$ is a non-free projective $\mathbb{Z}/6\mathbb{Z}$-module, as it is a direct summand of a free $\mathbb{Z}/6\mathbb{Z}$-module, $\mathbb{Z}/2\mathbb{Z}\oplus \mathbb{Z}/3\mathbb{Z}$.
\end{example}
\begin{example}
Let $R=\mathbb{Z}/2\mathbb{Z}\oplus \mathbb{Z}/2\mathbb{Z}$, where multiplication is defined component wise. Let $\mathbb{Z}/2\mathbb{Z}$ be an $R$-module where the $R$-action is defined \[
(r_1,r_2)\cdot x = r_1x 
\]Then $\mathbb{Z}/2\mathbb{Z}$ is a non-free projective $R$-module.
\end{example}
Now that we are acquainted with projective modules, we are finally ready to introduce the formal framework behind resolutions, which as we mentioned at the beginning of this section, are useful in defining the structure of objects from Abelian categories, such as the category of unital $R$-modules. In this particular case, given an $R$-module $M$, we are interested in examining how closely it resembles a free module. By Proposition 5.3, we know that there exists a free module $P_0$ and epimorphism $\varphi_0:P_0\rightarrow M$, and Theorem 1.1 (First Isomorphism Theorem) asserts that $P_0/\ker(\varphi_0)\cong M$. If $\ker(\varphi_0)=0$, then we are satisfied; $M$ is itself a free module. If not, then $M$ need not be free, but can certainly be extended by $\ker(\varphi_0)$ to a free module. We can measure how free this extension is as well, for $\ker(\varphi_0)$ likewise need not be free, so choose a free module $P_1$ and epimorphism $\varphi_1:P_1\rightarrow \ker(\varphi_0)$. Iterating this process infinitely many times, we can construct a chain complex of free modules that maps onto $M$. Proposition 5.1 asserts that we could have done this same procedure with projective modules. Formally, we can make the following definition.
\begin{definition}
Let $M$ be an $R$-module. A \textbf{projective resolution of M}, denoted $P$, is a collection of projective modules $\{P_i\}_{i\in \mathbb{N}}$ and homomorphisms $\varphi_i:P_{i}\rightarrow P_{i-1}$ where $P_{-1}=M$ such that the following chain complex is exact. \[
...\overset{\varphi_3}{\rightarrow} P_2\overset{\varphi_2}{\rightarrow}P_1\overset{\varphi_1}{\rightarrow}P_0\overset{\varphi_0}{\rightarrow}M\rightarrow 0
\]
If each $P_i$ is free, we call it a $\textbf{free resolution}$.
\end{definition}
The discussion preceding Definition 5.4 shows that any $R$-module $M$ has a free resolution, defined inductively for each degree by choosing a free module mapping onto the kernel of the prior degree homomorphism. Of course, we can say the same thing about projective resolutions, as Proposition 5.1 implies that a free resolution is a projective resolution. For subsequent results, we only require that the resolution be projective. If for some $i\in \mathbb{N}$ we have that $\varphi_i$ is bijective, we say that the resolution is finite. Consider the following example of a finite projective resolution. 
\begin{example}
Let $R=\mathbb{Z}$; let $M=\mathbb{Z}/2\mathbb{Z}$.
\[
...\rightarrow0\rightarrow \mathbb{Z}\overset{\varphi_1}{\rightarrow}\mathbb{Z}\overset{\varphi_0}{\rightarrow}\mathbb{Z}/2\mathbb{Z}\rightarrow 0
\]The above sequence where $\varphi_0$ maps $x\mapsto x+2\mathbb{Z}$, i.e., it is the quotient homomorphism, and $\varphi_1$ maps $x\mapsto 2x$ is an exact chain complex, and since we have a bijection with the mapping $x\mapsto 2x$, it is a finite projective resolution. One may assume that every $P_i$ for $i>1$ is $0$.
\end{example}
Here's an infinite projective resolution. 
\begin{example}
Let $R=\mathbb{Z}/4\mathbb{Z}$; let $M=\mathbb{Z}/2\mathbb{Z}$.\[
...\overset{\varphi_2}{\rightarrow} \mathbb{Z}/4\mathbb{Z}\overset{\varphi_1}{\rightarrow}\mathbb{Z}/4\mathbb{Z}\overset{\varphi_0}{\rightarrow}\mathbb{Z}/2\mathbb{Z}\rightarrow 0
\]The above sequence where $\varphi_0$ maps $x+4\mathbb{Z}\mapsto x+2\mathbb{Z}$, and $\varphi_i$ for $i\geq1$ maps $x+4\mathbb{Z}\mapsto 2x+4\mathbb{Z}$ is an exact chain complex, and since none of the $\varphi_i$ are bijective, it is an infinite projective resolution.
\end{example}
Now, going back to the original motivation for studying resolutions, notice that for a projective resolution of $M$, we may remove $M$ from the sequence without loss of data since $M\cong \coker(\varphi_1)$. Effectively, this shows that a projective resolution of $M$ completely encodes the structure of $M$, and thus one can present $M$ by its projective resolution. Naturally, one might ask if this presentation is unique. In a way, it is. This section's final result shows that any projective resolution of a given $R$-module is unique up to quasi-isomorphism. We present it, prove it, and conclude this section. 
\begin{theorem}
Suppose $P$ and $P'$ are projective resolutions of $M$, as shown in the diagram below. \[\begin{tikzcd}
	{...} & {P_2} & {P_1} & {P_0} & M & 0 \\
	{...} & {P_2'} & {P_1'} & {P_0'} & M & 0
	\arrow["{\varphi_3}", from=1-1, to=1-2]
	\arrow["{\varphi_2}", from=1-2, to=1-3]
	\arrow["{\varphi_1}", from=1-3, to=1-4]
	\arrow["{\varphi_0}", from=1-4, to=1-5]
	\arrow["{\psi_3}", from=2-1, to=2-2]
	\arrow["{\psi_2}", from=2-2, to=2-3]
	\arrow["{\psi_1}", from=2-3, to=2-4]
	\arrow["{\psi_0}", from=2-4, to=2-5]
	\arrow[from=1-5, to=1-6]
	\arrow[from=2-5, to=2-6]
\end{tikzcd}\]
Then, there exists a chain map $f=\{f_i:P_i\rightarrow P_i'\}_{i\in \mathbb{N}}$ that induces an isomorphism on homologies $\tilde{f}_i:H_i(P)\rightarrow H_i(P')$ for all $i\in \mathbb{N}$.
\end{theorem}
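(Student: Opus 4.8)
The plan is to prove the slightly stronger statement that there is a chain map $f\colon P\to P'$ lifting $\id_M$, constructed degree by degree using projectivity, and then to observe that any such lift automatically induces isomorphisms on all homology modules. Set $f_{-1}=\id_M\colon M\to M$. For the base case, note that $\psi_0\colon P_0'\to M$ is an epimorphism and $\varphi_0\colon P_0\to M$ is a homomorphism out of the projective module $P_0$, so Definition 5.2 yields a lifting map $f_0\colon P_0\to P_0'$ with $\psi_0\circ f_0=\varphi_0=f_{-1}\circ\varphi_0$.

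For the inductive step, suppose $f_0,\dots,f_{i-1}$ have been built so that $\psi_j\circ f_j=f_{j-1}\circ\varphi_j$ for every $j\le i-1$. The key observation is that $f_{i-1}\circ\varphi_i$ factors through $\ker(\psi_{i-1})$: indeed $\psi_{i-1}\circ f_{i-1}\circ\varphi_i=f_{i-2}\circ\varphi_{i-1}\circ\varphi_i=f_{i-2}\circ 0=0$, using the induction hypothesis and the chain-complex identity $\varphi_{i-1}\circ\varphi_i=0$. Since $P'$ is exact, $\ker(\psi_{i-1})=\im(\psi_i)$, so $\psi_i$ corestricts to an epimorphism $P_i'\to\ker(\psi_{i-1})$. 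Applying projectivity of $P_i$ (Definition 5.2) to the map $f_{i-1}\circ\varphi_i\colon P_i\to\ker(\psi_{i-1})$ produces $f_i\colon P_i\to P_i'$ with $\psi_i\circ f_i=f_{i-1}\circ\varphi_i$. By construction the family $f=\{f_i\}_{i\in\mathbb{N}}$ makes every square commute, so it is a chain map.

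It remains to check that $f$ induces isomorphisms on homology. For $i\ge 1$, both $H_i(P)$ and $H_i(P')$ vanish, since the resolutions are exact at each $P_i$ with $i\ge1$, so $\tilde f_i$ is the (invertible) map $0\to 0$. For $i=0$, the First Isomorphism Theorem (Theorem 1.1) applied to the surjections $\varphi_0$ and $\psi_0$ identifies $H_0(P)=P_0/\im(\varphi_1)=P_0/\ker(\varphi_0)\cong M$ and likewise $H_0(P')\cong M$; under these identifications $\tilde f_0$ sends the class of $x\in P_0$ to $\psi_0(f_0(x))=\varphi_0(x)$, i.e.\ it corresponds to $\id_M$, and is therefore an isomorphism.

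The main obstacle is the inductive step: one must be careful that the map being lifted, $f_{i-1}\circ\varphi_i$, genuinely lands inside $\ker(\psi_{i-1})$ — this is precisely where both the chain-complex relation and the induction hypothesis are needed — and then that exactness of $P'$ promotes $\psi_i$ to a surjection onto that kernel, so that the defining property of a projective module can be invoked. (If one additionally wanted $f$ to be a chain homotopy equivalence, one would symmetrically build a lift $g\colon P'\to P$ of $\id_M$ and invoke uniqueness of such lifts up to chain homotopy; this is not required for the statement as given.)
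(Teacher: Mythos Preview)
Your proof is correct and follows essentially the same approach as the paper: both construct the chain map $f$ inductively by invoking projectivity of each $P_i$ against the surjection $P_i'\twoheadrightarrow\ker(\psi_{i-1})$, after checking that $f_{i-1}\circ\varphi_i$ lands in that kernel. The only cosmetic difference is at degree~$0$: the paper treats the augmented complex (so $H_0(P)=\ker(\varphi_0)/\im(\varphi_1)=0$ and every $\tilde f_i$ is the zero map between zero modules), whereas you work with the deleted complex (so $H_0(P)\cong M$ and $\tilde f_0$ corresponds to $\id_M$); either convention yields the desired isomorphism.
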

\begin{proof}
We define $f$ inductively. By hypothesis, $\psi_0:P_0'\rightarrow M$ is a surjection, and since $P_0$ is projective, we have a lifting $f_0:P_0\rightarrow P_0'$ such that $\psi_0\circ f_0=\varphi_0$.
\[\begin{tikzcd}
	& {P_0'} \\
	{P_0} & M
	\arrow["{\varphi_0}", from=2-1, to=2-2]
	\arrow["{\psi_0}"', two heads, from=1-2, to=2-2]
	\arrow["{f_0}", dashed, from=2-1, to=1-2]
\end{tikzcd}\]
Of course, $id_M\circ \varphi_0=\psi_0\circ f_0$, so for the purposes of our induction, we say that $id_M=f_{-1}$.
\[\begin{tikzcd}
	{...} & {P_2} & {P_1} & {P_0} & M & 0 \\
	{...} & {P_2'} & {P_1'} & {P_0'} & M & 0
	\arrow["{\varphi_3}", from=1-1, to=1-2]
	\arrow["{\varphi_2}", from=1-2, to=1-3]
	\arrow["{\varphi_1}", from=1-3, to=1-4]
	\arrow["{\varphi_0}", from=1-4, to=1-5]
	\arrow["{\psi_3}", from=2-1, to=2-2]
	\arrow["{\psi_2}", from=2-2, to=2-3]
	\arrow["{\psi_1}", from=2-3, to=2-4]
	\arrow["{\psi_0}", from=2-4, to=2-5]
	\arrow[from=1-5, to=1-6]
	\arrow[from=2-5, to=2-6]
	\arrow["{f_0}"', dashed, from=1-4, to=2-4]
	\arrow["id"', from=1-5, to=2-5]
\end{tikzcd}\]

Now, suppose that $f_n:P_n\rightarrow P_n'$ and $f_{n-1}:P_{n-1}\rightarrow P_{n-1}'$ exist such that $\psi_n\circ f_n=f_{n-1}\circ\varphi_n$. For any $x\in P_{n+1}$, \begin{align*}
    \psi_n\circ (f_n\circ \varphi_{n+1})(x)&=(\psi_n\circ f_n)\circ \varphi_{n+1}(x)\\
    &=(f_{n-1}\circ \varphi_n)\circ \varphi_{n+1}(x)\\
    &=f_{n-1}\circ (\varphi_n\circ \varphi_{n+1})(x)\\
    &=0
    \end{align*}
By Proposition 2.1, $\im(f_n\circ\varphi_{n+1})\subseteq \ker(\psi_n)$. Since $\psi_{n+1}$ surjects onto $\ker(\psi_n)$, $P_{n+1}$ projective implies that there exists a $f_{n+1}:P_{n+1}\rightarrow P_{n+1}'$ such that $\psi_{n+1}\circ f_{n+1}=f_{n}\circ \varphi_{n+1}$. This gives us $f_{i}$ for all $i\in \mathbb{N}$. 

Define $\tilde{f_i}:H_i(P)\rightarrow H_i(P')$ by \[
k+\im(\varphi_{i+1})\mapsto f_i(k)+\im(\psi_{i+1})
\]We leave it to the reader to verify that this is a well defined homomorphism. Observe that for any $i\in \mathbb{N}$, $H_i(P)=\ker(\varphi_i)/\im(\varphi_{i+1})=\{0+\im(\varphi_{n+1})\}$ and $H_i(P')=\ker(\psi_i)/\im(\psi_{i+1})=\{0+\im(\psi_{i+1})\}$. As a homomorphism, $\tilde{f_i}$ maps $0+\im(\varphi_{n+1}\mapsto 0+\im(\psi_{i+1})$; this is trivially onto and one-to-one. Consequently, we can see that $f$ induces an isomorphism between $H_i(P)$ and $H_i(P')$ for each $i\in \mathbb{N}$.   
\end{proof}

In the last section of this work, we continue exploring the intrinsic abstract mathematics behind homological algebra and examine yet a further application of projective resolutions, in the context of category theory.

\section{Tor Functor}
\label{sec:robin}
As was mentioned in the previous section, projective resolutions are useful in the sense that they help describe the structure of objects in an Abelian category through their invariants. However, this is related to yet one powerful application. One of the central theorems in homological algebra is the ``universal coefficient theorem", a key component of algebraic topology that seeks to relate homology groups of different topological shapes to those of simplicial complexes (multi-dimensional triangles). Conceptually, this is the algebraic framework through which we can simplify our analysis and classification of topologies, by looking at simplicial complexes instead of complicated shapes. Tor functors, the objects of discussion in this section, are the mechanism through which this association is done.

Before continuing, it is important to refamiliarize ourselves with an essential concept of module theory. Recall that given $R$-modules $M$ and $N$, we may define the tensor product $M \otimes_R N$ as an $R$-module, equipped with a map $\iota: M \times N \rightarrow M \otimes_R N$. Recall the tensor product satisfies the condition that any $R$-bilinear map $f: M \times N \rightarrow S$ induces an $R$-linear map $\overline{f}: M \otimes_R N \rightarrow S$ such that $f = \overline{f} \circ \iota$. The proofs of existence and uniqueness of the tensor product are given in most abstract algebra books, including Dummit and Foote \autocite{dummitandfoote}: for conciseness, we will not repeat them here.

We now introduce briefly another concept from category theory to develop the language needed to progress with homological algebra. Functors are one of the main building blocks of category theory, acting as the bridges between categories. Formally defined, for two categories $C$ and $D$, a functor $F$ will map all objects $A \in C$ to respective objects $F(A)\in D$ and  will map all morphisms $(f:A\rightarrow B) \in C$ to respective morphisms $(F(f):F(A)\rightarrow F(B)) \in D$. Functors must preserve identity morphisms, so that $F(id_X) = id_{F(X)}$, for all $X \in C$, and functors must preserve composition of maps, so $F(f \circ g) = F(f) \circ F(g)$, for all $f:X \rightarrow Y$ and $g:Y \rightarrow Z \in C$. As can be seen, this is an additional level of abstraction than the simpler set or group theory, and it can continue growing, by considering the category of categories and the functors of functors.

Note that by definition, functors preserve composition of maps, so they also preserve commutative diagrams. Now, our goal in this section is to use projective resolutions to construct something known as the left-derived functors of an $R$-module. However, before explaining what left-derived functors are, we begin by defining what is known as a right-exact functor.
\begin{definition}
A functor $F: \rmod{R} \rightarrow \rmod{S}$ between modules in $R$ and modules in $S$ is $\textbf{right-exact}$ if for every short exact sequence of $R$-modules \[
0 \rightarrow A\rightarrow B\rightarrow C\rightarrow 0 
\] it follows that the sequence \[
F(A) \rightarrow F(B)\rightarrow F(C)\rightarrow 0 
\] is exact in $S$ modules, but the map $F(A) \rightarrow F(B)$ may not be injective.
\end{definition}
The notion of a $\textbf{left-exact}$ functor can be defined dually; alternatively, we can say right-exact functors preserve cokernels and left-exact functors preserve kernels. These definitions can be extended with ease to any Abelian category, but for this paper we are only concerned with these functors over $R$-modules. An example of a right-exact functor is the tensor product $M \otimes_{R} -:Mod_{R} \rightarrow Mod_{R}$, for $R$ commutative. Note that this functor takes each $R$-module $N$ to the $R$-module $M \otimes_R N$, and each $R$-module homomorphism $f: N \rightarrow P$ to the morphism $M \otimes_R f: M \otimes_R N \rightarrow M \otimes_R P$ defined by $(M \otimes_R f) (m \otimes n) = m \otimes f(n)$.
Now, given a right exact functor $F$ and a $R$-module $M$, consider any projective resolution of $M$:  \[
\ldots \rightarrow P_2\rightarrow P_1\rightarrow P_0\rightarrow M 
\] Then apply $F$ to the projective resolution: \[
\ldots \rightarrow F(P_2) \rightarrow F(P_1)\rightarrow F(P_0) 
\] Note that $\coker(F(P_1) \rightarrow F(P_0)) \cong F(M)$. Computing the homology of this sequence at the $i$-th coordinate gives an object, denoted: \[L_iF(M) = \ker(F(P_i) \rightarrow F(P_{i-1}))/ \im(F(P_{i+1}) \rightarrow F(P_i))\]
We further define $L_0F(M)=F(M)$. Furthermore, consider any function $M \overset{f}{\rightarrow} N$, and choose projective resolutions $P_i$ and $Q_i$ for $M$ and $N$ respectively.
\[\begin{tikzcd}
	\ldots & P_2 & P_1 & P_0 & M & 0\\
	\ldots & Q_2 & Q_1 & Q_0 & N & 0
	\arrow[from=1-1, to=1-2]
	\arrow[from=1-2, to=1-3]
	\arrow[from=1-3, to=1-4]
	\arrow[from=1-4, to=1-5]
	\arrow[from=1-5, to=1-6]
	\arrow[from=2-1, to=2-2]
	\arrow[from=2-2, to=2-3]
	\arrow[from=2-3, to=2-4]
	\arrow[from=2-4, to=2-5]
	\arrow[from=2-5, to=2-6]
	\arrow["f", from=1-5, to=2-5]
\end{tikzcd}\]
Then consider the composite $P_0 \rightarrow M \overset{f}{\rightarrow} N$. Then since $Q_0$ is projective and the map $Q_0 \rightarrow N$ is surjective, it follows that there exists an induced map $f_0$ such that the diagram commutes:
\[\begin{tikzcd}
	\ldots & P_2 & P_1 & P_0 & M & 0\\
	\ldots & Q_2 & Q_1 & Q_0 & N & 0
	\arrow[from=1-1, to=1-2]
	\arrow[from=1-2, to=1-3]
	\arrow[from=1-3, to=1-4]
	\arrow[from=1-4, to=1-5]
	\arrow[from=1-5, to=1-6]
	\arrow[from=2-1, to=2-2]
	\arrow[from=2-2, to=2-3]
	\arrow[from=2-3, to=2-4]
	\arrow[from=2-4, to=2-5]
	\arrow[from=2-5, to=2-6]
	\arrow["f_0", dashed, from=1-4, to=2-4]
	\arrow["f", from=1-5, to=2-5]
\end{tikzcd}\]
To construct $f_1$, note in particular since the diagram commutes we have that the composite \[
(P_1 \rightarrow P_0 \rightarrow Q_0 \rightarrow N) = (P_1 \rightarrow P_0 \rightarrow M) \rightarrow N = f \circ 0 = 0.
\] So in particular: \[
\im(P_1 \rightarrow P_0 \rightarrow Q_0) \subseteq \ker(Q_0 \rightarrow N) = \im(Q_1 \rightarrow Q_0)
\]Hence $f_0$ restricts onto the image of $Q_1 \rightarrow Q_0$, and then since $Q_1$ is projective and the map $Q_1 \rightarrow \im(Q_1 \rightarrow Q_0)$ is surjective, we do indeed induce a map $f_1$ from $P_1$ to $Q_1$. In the same way, we can define maps $f_2, f_3 \ldots$ such that the diagram commutes:
\[\begin{tikzcd}
	\ldots & P_2 & P_1 & P_0 & M & 0\\
	\ldots & Q_2 & Q_1 & Q_0 & N & 0
	\arrow[from=1-1, to=1-2]
	\arrow[from=1-2, to=1-3]
	\arrow[from=1-3, to=1-4]
	\arrow[from=1-4, to=1-5]
	\arrow[from=1-5, to=1-6]
	\arrow[from=2-1, to=2-2]
	\arrow[from=2-2, to=2-3]
	\arrow[from=2-3, to=2-4]
	\arrow[from=2-4, to=2-5]
	\arrow[from=2-5, to=2-6]
	\arrow["f_2", dashed, from=1-2, to=2-2]
	\arrow["f_1", dashed, from=1-3, to=2-3]
	\arrow["f_0", dashed, from=1-4, to=2-4]
	\arrow["f", from=1-5, to=2-5]
\end{tikzcd}\]
Then applying $F$ maintains that the diagram commutes, so taking the homology induces maps:
\[\begin{tikzcd}
	\ldots & L_2F(M) & L_1F(M) & F(M)\\
	\ldots & L_2F(N) & L_1F(N) & F(N)
	\arrow["0", from=1-1, to=1-2]
	\arrow["0", from=1-2, to=1-3]
	\arrow["0", from=1-3, to=1-4]
	\arrow["0", from=2-1, to=2-2]
	\arrow["0", from=2-2, to=2-3]
	\arrow["0", from=2-3, to=2-4]
	\arrow["F(f_2)", from=1-2, to=2-2]
	\arrow["F(f_1)", from=1-3, to=2-3]
	\arrow["F(f_0)", from=1-4, to=2-4]
\end{tikzcd}\]
We denote $L_iF(f)=F(f_i)$. To summarize, we have constructed, for each $i \in \mathbb{N}$, a set of $R$-modules $L_iF(M)$ and a set of maps $L_iF(f): L_iF(M) \rightarrow L_iF(N)$. We consolidate this information in the following definition:
\begin{definition}
Given an $R$-module $M$ and a right-exact functor $F$, we compute the $\textbf{left-derived functors}$ $L_iF$ as above.
\end{definition}
We leave it to the interested reader to show that $L_iF$ is indeed a functor from $\rmod{R}$ to $\rmod{R}$. Currently, we have defined the left-derived functors based on a specific projective resolution: we will now see that any projective resolution suffices.
\begin{theorem}
$L_iF(M)$ always exists and is independent of the choice of projective resolution. 
\end{theorem}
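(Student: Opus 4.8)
The plan is to establish the statement in two halves: \emph{existence}, which guarantees that the recipe for $L_iF(M)$ can be carried out at all, and \emph{independence of the choice of projective resolution}, which is where the real content lies. For existence, I would invoke Proposition 5.3 together with the inductive construction described just before Definition 5.4: pick a free (hence projective) module $P_0$ surjecting onto $M$, then a free module $P_1$ surjecting onto $\ker(\varphi_0)$, and iterate. This yields a free resolution $P_{\bullet}$ of $M$, in particular a projective one, so the modules $L_iF(M) = \ker(F(\varphi_i))/\im(F(\varphi_{i+1}))$ are well defined, with the degree-$0$ case handled by the identification $\coker(F(\varphi_1)\to F(\varphi_0))\cong F(M)$ already noted above.

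The heart of the argument is a strengthening of the lifting construction performed in this section and in Theorem 5.3, namely the \emph{comparison theorem}: given projective resolutions $P_{\bullet}\to M$ and $Q_{\bullet}\to N$ and a module map $h:M\to N$, there is a chain map $f_{\bullet}:P_{\bullet}\to Q_{\bullet}$ lifting $h$ (the same inductive argument as before, using projectivity of each $P_i$, exactness of the bottom row, and Proposition 2.1), and \emph{any two} such lifts $f_{\bullet},g_{\bullet}$ are chain homotopic. For the uniqueness part I would set $u_{\bullet}=f_{\bullet}-g_{\bullet}$, a chain map lifting the zero map, and build a chain homotopy $\{s_i:P_i\to Q_{i+1}\}$ inductively: starting from $s_{-1}=0$, at each stage the morphism $u_i-s_{i-1}\circ\varphi_i$ lands in $\ker(\psi_i)=\im(\psi_{i+1})$ by commutativity and exactness, so projectivity of $P_i$ lets us lift it through $\psi_{i+1}$ to obtain $s_i$ with $u_i=\psi_{i+1}s_i+s_{i-1}\varphi_i$. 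Applying this with $M=N$, $h=\id_M$, and two resolutions $P_{\bullet},Q_{\bullet}$ of $M$: lift $\id_M$ to $\phi_{\bullet}:P_{\bullet}\to Q_{\bullet}$ and to $\psi_{\bullet}:Q_{\bullet}\to P_{\bullet}$; then $\psi_{\bullet}\circ\phi_{\bullet}$ and $\id_{P_{\bullet}}$ both lift $\id_M$, hence are chain homotopic, and likewise $\phi_{\bullet}\circ\psi_{\bullet}$ is chain homotopic to $\id_{Q_{\bullet}}$. Thus $P_{\bullet}\simeq Q_{\bullet}$ as chain complexes.

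Finally I would transport this equivalence across $F$. Because $F$ is additive (it respects the Abelian-group structure on hom-sets, exactly as the tensor functor $M\otimes_R-$ does), applying $F$ to the identity $f_i-g_i=\psi_{i+1}s_i+s_{i-1}\varphi_i$ and using $F(a+b)=F(a)+F(b)$ together with functoriality gives $F(f_i)-F(g_i)=F(\psi_{i+1})F(s_i)+F(s_{i-1})F(\varphi_i)$; so $F$ carries chain homotopic maps to chain homotopic maps, hence chain homotopy equivalences to chain homotopy equivalences. Therefore $F(\phi_{\bullet})$ and $F(\psi_{\bullet})$ are mutually inverse chain homotopy equivalences between $F(P_{\bullet})$ and $F(Q_{\bullet})$, and since chain homotopic maps induce equal maps on homology (the observation behind the definitions in Section 4), they induce mutually inverse isomorphisms $H_i(F(P_{\bullet}))\cong H_i(F(Q_{\bullet}))$. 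This is exactly the statement that $L_iF(M)$ is independent of the resolution; running the same argument on the comparison square for a map $f:M\to N$ shows $L_iF(f)$ is independent of the chosen resolutions and lifts as well.

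The step I expect to be the main obstacle is the inductive construction of the homotopy $\{s_i\}$ in the comparison theorem: one must carefully check at each stage that $u_i-s_{i-1}\circ\varphi_i$ genuinely factors through $\im(\psi_{i+1})$ before invoking projectivity of $P_i$, and seed the induction correctly with $s_{-1}=0$ (and $u_{-1}=0$ on $M$). A secondary point worth flagging explicitly is that it is \emph{additivity} of $F$, not mere functoriality, that makes $F$ preserve chain homotopies; this is precisely why left-derived functors must be computed from projective resolutions up to chain homotopy equivalence rather than from quasi-isomorphism alone — an arbitrary right-exact functor need not preserve quasi-isomorphisms.
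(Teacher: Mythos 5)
Your proposal is correct, and it does more than the paper does: the paper's own proof handles existence exactly as you do (iterate Proposition 5.3 to build a free, hence projective, resolution) but then simply cites Weibel \S 2.4.1 for independence, whereas you supply the standard comparison-theorem argument in full. Your uniqueness argument is sound: the inductive construction of the homotopy is seeded correctly with $s_{-1}=0$, and the key verification that $\psi_i\circ(u_i - s_{i-1}\varphi_i)=0$ (so that the map factors through $\ker(\psi_i)=\im(\psi_{i+1})$ and projectivity of $P_i$ applies) goes through using the inductive hypothesis $u_{i-1}=\psi_i s_{i-1}+s_{i-2}\varphi_{i-1}$ and $\varphi_{i-1}\varphi_i=0$. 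Two points you raise deserve emphasis because the paper glosses over them: first, the paper never states that $F$ is additive, yet additivity is exactly what is needed for $F$ to carry the homotopy identity $f_i-g_i=\psi_{i+1}s_i+s_{i-1}\varphi_i$ to a homotopy identity between $F(f_i)$ and $F(g_i)$; this hypothesis is satisfied by $M\otimes_R-$ and should really be part of Definition 6.1. Second, you invoke the fact that chain homotopic maps induce equal maps on homology; Section 4 defines chain homotopy but never proves this, so a one-line check (if $z$ is a cycle then $(f-g)(z)=d(s(z))$ is a boundary) would make your argument fully self-contained. With those two small additions your write-up is a complete proof of the independence statement that the paper leaves to an external reference.
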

\begin{proof}
Existence follows immediately from the construction of a free resolution over a module, as discussed preceding Definition 5.4, and the fact that any free resolution is also a projective resolution. A proof of uniqueness can be found at Weibel \S 2.4.1 \autocite{weibel}.        
\end{proof}
\begin{proposition}
If $M$ is projective, then $L_iF(M)=0$ for all $i>0$.
\end{proposition}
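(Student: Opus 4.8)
The plan is to invoke Theorem 6.1, which guarantees that $L_iF(M)$ does not depend on the chosen projective resolution: it therefore suffices to compute the left-derived functors of $M$ from \emph{one} especially convenient resolution and read off the answer.

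Since $M$ is projective, I would take the ``stupid'' resolution
\[
\cdots \to 0 \to 0 \to M \overset{\mathrm{id}_M}{\to} M \to 0,
\]
that is, $P_0 = M$ with $\varphi_0 = \mathrm{id}_M$, and $P_i = 0$, $\varphi_i = 0$ for all $i \ge 1$. The first step is to check this really is a projective resolution in the sense of Definition 5.4: each $P_i$ is projective (using the hypothesis on $M$, together with the fact that the zero module is trivially projective), the sequence is exact at $M$ because $\mathrm{id}_M$ is onto, it is exact at $P_0$ because $\ker(\mathrm{id}_M) = 0 = \im(\varphi_1)$, and it is exact at every $P_i$ with $i \ge 1$ because all objects and maps there vanish.

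Next I would apply $F$ to the deleted complex and compute its homology. The deleted complex is $\cdots \to F(0) \to F(0) \to F(M)$, with $F(M)$ in degree $0$ and $F(0)$ in every positive degree. By definition $L_0F(M) = F(M)$; for $i > 0$ the homology is built from copies of $F(0)$, so I would next observe that $F(0) = 0$. This follows from right-exactness alone: applying $F$ to the short exact sequence $0 \to M \overset{\mathrm{id}_M}{\to} M \to 0 \to 0$ produces an exact sequence $F(M) \overset{\mathrm{id}}{\to} F(M) \to F(0) \to 0$, whose second map is thus simultaneously the zero map (its kernel is all of $F(M)$) and surjective, forcing $F(0) = 0$. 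Hence the deleted complex is zero in every positive degree, so all of its positive homology vanishes and $L_iF(M) = 0$ for $i > 0$.

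There is essentially no obstacle once Theorem 6.1 is in hand; the only point needing a word of care is the identity $F(0) = 0$, which the short diagram argument above settles. Alternatively one may simply quote that $F$ is additive — the standing hypothesis under which derived functors are defined, and which $M \otimes_R -$ satisfies — so that $F$ sends the zero object to the zero object directly.
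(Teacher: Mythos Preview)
Your argument is correct and follows the same strategy as the paper --- invoke Theorem~6.1 and compute $L_iF(M)$ from one convenient resolution. In fact your execution is the right one: the length-zero resolution $\cdots \to 0 \to 0 \to M \overset{\id_M}{\to} M$ that you use is genuinely a projective resolution, whereas the sequence printed in the paper, $\cdots \to M \overset{\id}{\to} M \overset{\id}{\to} M \overset{\id}{\to} M$, is not even a chain complex (consecutive differentials compose to $\id_M$, not $0$), so your version is the correct way to carry out the shared idea, and your care in verifying $F(0)=0$ is exactly what is needed to finish.
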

\begin{proof}
One particular projective resolution of $M$ is \[
\ldots \rightarrow M\overset{id}\rightarrow M\overset{id}\rightarrow M\overset{id}\rightarrow M 
\]
The functor preserves identity maps, so the result is clear.
\end{proof}
Now, to explain the motivation for left-derived functors, we can start by considering the following theorem.
\begin{theorem}
Given an exact sequence \[
0 \rightarrow A \overset{f}{\rightarrow} B \overset{g}{\rightarrow} C \rightarrow 0 
\]
and a right-exact functor $F$, it follows that the left-derived functors of $F$ induce the following long exact sequence:
\[\begin{tikzcd}
	\ldots & L_2F(A) & L_2F(B) \arrow[d, phantom, ""{coordinate, name=Y}] & L_2F(C)&\\
	& L_1F(A) & L_1F(B) \arrow[d, phantom, ""{coordinate, name=Z}] & L_1F(C)&\\
	& L_0F(A) & L_0F(B) & L_0F(C)&0
	\arrow[from=1-1, to=1-2]
	\arrow["L_2F(f)", from=1-2, to=1-3]
	\arrow["L_2F(g)", from=1-3, to=1-4]
	\arrow[from=1-4, to=2-2,
	rounded corners,
to path={ -- ([xshift=2ex]\tikztostart.east) \tikztonodes
|- (Y) [near end]
-| ([xshift=-2ex]\tikztotarget.west)
-- (\tikztotarget)},
"\delta_2"
	]
	\arrow["L_1F(f)", from=2-2, to=2-3]
	\arrow["L_1F(g)", from=2-3, to=2-4]
	\arrow[from=2-4, to=3-2,
rounded corners,
to path={ -- ([xshift=2ex]\tikztostart.east)\tikztonodes
|- (Z) [near end]
-| ([xshift=-2ex]\tikztotarget.west)
-- (\tikztotarget)},
"\delta_1"
	]
	\arrow["F(f)", from=3-2, to=3-3]
	\arrow["F(g)", from=3-3, to=3-4]
	\arrow[from=3-4, to=3-5]
\end{tikzcd}\]
\end{theorem}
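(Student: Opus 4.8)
The plan is to reduce the statement to the long exact sequence in homology (Theorem 3.1) applied to a carefully chosen short exact sequence of chain complexes. Concretely, I would first produce projective resolutions $P_\bullet \to A$, $P_\bullet' \to B$, $P_\bullet'' \to C$ that assemble into a short exact sequence of chain complexes
\[
0 \rightarrow P_\bullet \rightarrow P_\bullet' \rightarrow P_\bullet'' \rightarrow 0
\]
lying over $0 \to A \to B \to C \to 0$, with the crucial extra feature that each degree $0 \to P_n \to P_n' \to P_n'' \to 0$ is split. Granting this, applying $F$ levelwise and invoking Theorem 3.1 yields the asserted long exact sequence, \emph{provided} the homologies of $F(P_\bullet)$, $F(P_\bullet')$, $F(P_\bullet'')$ genuinely compute $L_\bullet F(A)$, $L_\bullet F(B)$, $L_\bullet F(C)$ — which is precisely Theorem 6.2 (independence of the resolution).

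The construction of these compatible resolutions is the Horseshoe Lemma, and it is the main obstacle. I would build it by induction on degree. In degree $0$, choose any projective surjections $P_0 \twoheadrightarrow A$ and $P_0'' \twoheadrightarrow C$; since $P_0''$ is projective and $B \to C$ is surjective, the latter lifts through $B$, and we set $P_0' = P_0 \oplus P_0''$ with the map to $B$ given by the composite $P_0 \to A \hookrightarrow B$ on the first summand and the lift on the second. A short diagram chase (or an application of the Snake Lemma, Lemma 3.1) shows this map $P_0' \to B$ is surjective and that the kernels fit into a short exact sequence $0 \to K \to K' \to K'' \to 0$ of the same shape; iterating with $K, K', K''$ in place of $A, B, C$ produces the resolutions, and by construction $P_n' = P_n \oplus P_n''$, so each row is split. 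One must also check that the resulting boundary maps make $P_\bullet'$ a chain complex and a resolution — the step where the diagram chase is slightly delicate.

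With the resolutions in hand, the rest is bookkeeping. Because $F$ is additive, it carries the degreewise split short exact sequence $0 \to P_\bullet \to P_\bullet' \to P_\bullet'' \to 0$ to a still-degreewise-split, hence genuine, short exact sequence of chain complexes $0 \to F(P_\bullet) \to F(P_\bullet') \to F(P_\bullet'') \to 0$. Theorem 3.1 then supplies a long exact sequence in homology with connecting maps $\delta_i$; by definition $H_i(F(P_\bullet)) = L_iF(A)$ and likewise for $B$ and $C$, and the maps induced by the chain maps $f_i$, $g_i$ are exactly $L_iF(f)$ and $L_iF(g)$. Since $L_0F = F$ and the homology sequence terminates at $H_0$, the tail reads $F(A) \to F(B) \to F(C) \to 0$, consistent with the right-exactness of $F$. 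Finally, well-definedness: a different choice of resolutions gives, through the comparison argument underlying Theorem 6.2, a chain homotopy equivalence between the two short exact sequences of complexes, so the resulting long exact sequences are canonically isomorphic. I expect the Horseshoe Lemma — particularly verifying surjectivity in degree $0$ and extracting the induced short exact sequence on kernels — to be the only step requiring real care; everything afterward is formal.
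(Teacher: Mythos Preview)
Your proposal is correct and follows the standard Horseshoe Lemma argument, which is precisely what the paper defers to when it says the proof ``is based [on] applying [the] `Snake Lemma' to get the connecting homomorphisms'' and points the reader to Rotman \S 6.27; in other words, the paper gives no proof of its own, and your sketch is the expected expansion of that citation. One small correction: the independence-of-resolution result you invoke is Theorem~6.1 in the paper, not Theorem~6.2 (which is the statement you are proving).
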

\begin{proof}
The proof is lengthy and technical, although it is based applying ``Snake Lemma" to get the connecting homomorphismss, as were shown above. More details can be found at Rotman \S 6.27 \autocite{rotman}
\end{proof}
However, a perhaps bigger motivation for left-derived functors lies in using them to define the Tor functor, the focus of this section. 
\begin{definition}
The \textbf{Tor functors} are the left-derived functors of $M \otimes_R -$. In particular, we denote $\Tor^{R}_i(M, N)$ as the $i$-th left-derived functor of $M \otimes_R -$, evaluated at $N$.
\end{definition}
The above is isomorphic to the $i$-th left-derived functor of $- \otimes_R N$ evaluated at $M$ in a similar manner to the isomorphism $M \otimes_R N \cong N \otimes_R M$, though this statement is not obvious. Hence we may also consider Tor as a functor of two variables $\Tor^{R}_i(-, -): \rmod{R} \times \rmod{R} \rightarrow \rmod{R}$. When the ring $R$ is understood, we frequently use the notation $\Tor_i(M, N)$ instead of $\Tor^{R}_i(M, N)$.

To illustrate this definition of the Tor functor in action, we can consider the following simple example. 

\begin{example}
Let $R = \mathbb{C}[x,y]$, with the ideal $I = (x,y)$. Then, $\Tor_1(R/I, I) = \Tor_2(R/I, R/I) = \mathbb{C}$.
\end{example}

Notice that the name Tor has to do with a link between the Tor functor and torsion, exemplified in the following exercise:
\begin{proposition}
Given a ring $R$, a $R$-module $M$, and a nonzerodivisor $x \in R$, we have that $\Tor_1(R/(x), M) \cong \{m \in M \mid xm = 0\}$.
\end{proposition}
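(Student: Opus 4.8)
The plan is to exploit the fact, noted just after Definition 6.4, that $\Tor_i(R/(x), M)$ may equally be computed as the $i$-th left-derived functor of $-\otimes_R M$ applied to $R/(x)$, i.e.\ from a projective resolution of the \emph{first} argument rather than the second. This is what makes the computation tractable, since $R/(x)$ admits an especially short free resolution.

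First I would write down that resolution. Because $x$ is a nonzerodivisor, the map $\mu_x\colon R\to R$ given by $r\mapsto xr$ is injective; its image is the ideal $(x)$, which is exactly the kernel of the quotient map $R\to R/(x)$. Hence
\[
0\to R\xrightarrow{\ \mu_x\ } R\to R/(x)\to 0
\]
is exact, and as $R$ is free (hence projective by Proposition 5.1) this is a finite free resolution $P_\bullet$ of $R/(x)$ with $P_0=P_1=R$ and $P_i=0$ for $i\ge 2$.

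Next I would delete the term $R/(x)$ and apply $-\otimes_R M$ to the truncated complex $\cdots\to 0\to R\xrightarrow{\mu_x} R$, obtaining a complex concentrated in homological degrees $1$ and $0$:
\[
\cdots\to 0\to R\otimes_R M\xrightarrow{\ \mu_x\otimes\id_M\ } R\otimes_R M .
\]
Using the canonical isomorphism $R\otimes_R M\cong M$, which is natural in $M$, the induced map $\mu_x\otimes\id_M$ is carried to multiplication by $x$ on $M$, namely $m\mapsto xm$; verifying this on simple tensors $1\otimes m$ is the one small check required. So up to isomorphism the complex computing $\Tor_\bullet(R/(x),M)$ is $\cdots\to 0\to M\xrightarrow{\ \cdot x\ } M$.

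Finally I would read off the homology. In degree $0$ we recover $\Tor_0(R/(x),M)\cong M/xM$, consistent with the definition $L_0F(M)=F(M)$; in degree $1$,
\[
\Tor_1(R/(x),M)=\frac{\ker(\mu_x\otimes\id_M)}{\im(0)}\cong\ker\!\big(\,\cdot x\colon M\to M\,\big)=\{m\in M\mid xm=0\},
\]
and $\Tor_i(R/(x),M)=0$ for $i\ge 2$. The only genuine subtlety is the appeal to the symmetry (balancing) of $\Tor$, so that a resolution of the first argument may legitimately be used; the text flags this as non-obvious but grants it. Everything else is the identification of $\mu_x\otimes\id_M$ with multiplication by $x$ and a one-line homology computation, so I would not expect any serious obstacle.
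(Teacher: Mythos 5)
Your proposal is correct and follows essentially the same route as the paper's proof: both use the two-term free resolution $0 \to R \xrightarrow{\cdot x} R \to R/(x) \to 0$ (valid because $x$ is a nonzerodivisor), tensor with $M$, identify $R \otimes_R M \cong M$, and read off $\Tor_1$ as the kernel of multiplication by $x$. If anything, your write-up is slightly more careful than the paper's in making the identification of $\mu_x \otimes \id_M$ with $\cdot x$ on $M$ explicit and in flagging the appeal to the balancing of $\Tor$.
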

\begin{proof}
Consider the exact sequence \[
0 \rightarrow R \overset{\cdot x}{\rightarrow} R \rightarrow R/(x) \rightarrow 0 
\]
The map $R \rightarrow R/(x)$ is the canonical projection, and the map $\cdot x$ is just ring multiplication in $R$ by $x$. Because $R$ is free and hence projective, we can consider this exact sequence as a projective resolution of $R/(x)$: \[
\ldots \rightarrow 0 \rightarrow 0 \rightarrow R \overset{\cdot x}{\rightarrow} R \rightarrow R/(x)
\]
We then apply tensor by $M$: \[
\ldots \rightarrow 0 \rightarrow 0 \rightarrow R \otimes_R M \overset{\cdot x \otimes id_M}{\longrightarrow} R \otimes_R M \rightarrow R/(x) \otimes_R M
\]
Hence \[
\Tor_1(R/(x), M) = \ker(\cdot x \otimes id_M) / \im(0 \rightarrow R \otimes_R M) \cong \ker(\cdot x \otimes id_M)
\]
However, we have $R \otimes_R M \cong M$ via the bijection $r \otimes m \mapsto rm$. So $\ker(\cdot x \otimes id_M)$ is exactly those elements $r \otimes_R m$ for which $xrm=0$, which bijects to exactly those elements $rm \in M$ for which $xrm = 0$. Since $x$ is a nonzerodivisor, in this case the Tor functor does compute the torsion.
\end{proof}

Now, naturally from its definition, the Tor functor also provides some very valuable insights into understanding the tensor product operator, as well as some of its applications in Ring theory, as can be seen in the proposition below.  

\begin{proposition}
Given a ring $R$ with ideals $I$, $J$, we have that \[\Tor_1(R/I, R/J) \cong (I \cap J)/IJ\]
\end{proposition}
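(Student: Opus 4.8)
The plan is to compute $\Tor_1(R/I, R/J)$ via the long exact sequence of Theorem 6.3 applied to a suitable short exact sequence in the second variable, and then to identify the resulting terms and maps explicitly as quotients of ideals.

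First I would fix the right-exact functor $F = R/I \otimes_R -$ and apply Theorem 6.3 to the short exact sequence
\[
0 \to J \to R \to R/J \to 0,
\]
where $J \to R$ is inclusion and $R \to R/J$ is the canonical projection. Recalling that $\Tor_i(R/I, N) = L_iF(N)$ and $L_0F = F$, this produces an exact sequence
\[
\cdots \to \Tor_1(R/I, R) \to \Tor_1(R/I, R/J) \to R/I \otimes_R J \overset{\alpha}{\to} R/I \otimes_R R \to R/I \otimes_R R/J \to 0,
\]
where $\alpha = \id_{R/I} \otimes \iota$ for the inclusion $\iota: J \hookrightarrow R$. Since $R$ is free, hence projective, Proposition 6.1 gives $\Tor_1(R/I, R) = L_1F(R) = 0$, so the map $\Tor_1(R/I, R/J) \to R/I \otimes_R J$ is injective with image $\ker\alpha$; thus $\Tor_1(R/I, R/J) \cong \ker\alpha$, and it remains to understand $\alpha$.

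Next I would make the two tensor products concrete. The isomorphism $R/I \otimes_R R \cong R/I$ via $\bar r \otimes s \mapsto \overline{rs}$ is standard. For the other term, apply the right-exact functor $- \otimes_R J$ to $0 \to I \to R \to R/I \to 0$: right-exactness yields an exact sequence $I \otimes_R J \to R \otimes_R J \to R/I \otimes_R J \to 0$, and since $R \otimes_R J \cong J$ while the image of $I \otimes_R J$ in $J$ is exactly the product ideal $IJ$, we obtain $R/I \otimes_R J \cong J/IJ$ via $\bar r \otimes j \mapsto \overline{rj}$. Under these two identifications, naturality of the tensor product forces $\alpha$ to become the natural map $J/IJ \to R/I$ sending $j + IJ \mapsto j + I$, which is well-defined because $IJ \subseteq I$. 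Finally, $j + IJ \in \ker\alpha$ iff $j \in I$, i.e. iff $j \in I \cap J$; since $IJ \subseteq I \cap J$ this kernel is precisely $(I \cap J)/IJ$, giving $\Tor_1(R/I, R/J) \cong (I \cap J)/IJ$.

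The main obstacle is the bookkeeping in the middle step: one must verify that the map $\alpha$ really is $j + IJ \mapsto j + I$ after transporting it along the isomorphisms $R/I \otimes_R R \cong R/I$ and $R/I \otimes_R J \cong J/IJ$ — that is, checking that the relevant naturality square for $\otimes$ commutes on simple tensors. Everything else (the identification of $IJ$ as the image, well-definedness of the quotient maps) is routine. As a sanity check one can run the argument with $I$ and $J$ interchanged, using the isomorphism $\Tor_1(R/I, R/J) \cong \Tor_1(R/J, R/I)$ noted after Definition 6.5, which also explains the symmetry of the answer.
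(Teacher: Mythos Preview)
Your proof is correct and follows essentially the same route as the paper: apply the long exact Tor sequence to $0 \to J \to R \to R/J \to 0$, kill $\Tor_1(R/I,R)$ by projectivity of $R$, and identify the kernel of the induced map $J/IJ \to R/I$. Your derivation of $R/I \otimes_R J \cong J/IJ$ via right-exactness of $-\otimes_R J$ is a touch more explicit than the paper's, but otherwise the arguments coincide (minor note: the long exact sequence result is the paper's Theorem~6.2, and the Tor definition is~6.3).
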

\begin{proof}
Consider the exact sequence \[
0 \rightarrow J \hookrightarrow R \rightarrow R/J \rightarrow 0 
\]
The maps are of course the inclusion and natural projection. Consider the long exact sequence induced by the Tor functor with $R/I$ (Theorem 6.2):
\[\begin{tikzcd}
	\ldots & \Tor_1(R/I, J) & \Tor_1(R/I, R) \arrow[d, phantom, ""{coordinate, name=Y}] & \Tor_1(R/I, R/J)&\\
	& R/I \otimes_R J & R/I \otimes_R R & R/I \otimes_R R/J&0
	\arrow[from=1-1, to=1-2]
	\arrow[from=1-2, to=1-3]
	\arrow[from=1-3, to=1-4]
	\arrow[from=1-4, to=2-2,
	rounded corners,
to path={ -- ([xshift=2ex]\tikztostart.east) \tikztonodes
|- (Y) [near end]
-| ([xshift=-2ex]\tikztotarget.west)
-- (\tikztotarget)}]
	\arrow[from=2-2, to=2-3]
	\arrow[from=2-3, to=2-4]
	\arrow[from=2-4, to=2-5]
\end{tikzcd}\]
Recall that $R$ is a free and hence projective module. Furthermore, recall that Tor is a left-derived functor, so we can see from Proposition $6.1$ that $\Tor_1(R/I, R) = 0$. We also note that $R/I \otimes_R R \cong R/I$ and $R/I \otimes_R J \cong J/IJ$, so we can rewrite the long exact sequence: \[
\ldots \rightarrow 0 \rightarrow \Tor_1(R/I, R/J) \rightarrow J/IJ \rightarrow R/I \rightarrow R/I \otimes_R R/J \rightarrow 0
\]
In particular we find that since the map $0 \rightarrow \Tor_1(R/I, R/J)$ has a trivial image, the kernel of the map $\Tor_1(R/I, R/J) \rightarrow J/IJ$ is trivial, so in fact we have by the first isomorphism theorem and exactness \[
\Tor_1(R/I, R/J) \cong \im(\Tor_1(R/I, R/J) \rightarrow J/IJ) \cong \ker(J/IJ \rightarrow R/I)
\]
We wish to understand the map $J/IJ \rightarrow R/I$. Reversing the isomorphisms, we find that this is the inclusion map 
\[R/I \otimes_R J \rightarrow R/I \otimes_R R\]
\[r+I \otimes s \mapsto r+I \otimes s\]
So after the isomorphisms, this is simply the map sending $r+IJ$ to $r+J$ for $r \in I$. So  the kernel of this map is exactly those elements $r+IJ$ for which $r \in J$ as well, giving indeed that $\ker(J/IJ \rightarrow R/I) \cong (I \cap J)/IJ$ as desired. 
\end{proof}
We now take a moment to pause and receive the skeptical claims that the definitions of projective resolutions and Tor are not, by nature, very constructible structures, especially when working in categories other than $R$-modules. Although this is true, the above examples and exercises indeed reveal that even in the cases where Tor is constructible, the functors do indeed carry a deep meaning of some of the fundamental properties of objects. For example, the previous proposition immediately implies that if $I+J=R$, then $IJ = I \cap J$, a central result about ideals of a ring. In addition, the Tor functor also has deep connections to flat\footnote{By definition, a module $M$ over a ring $R$ is flat if $\forall f:A\rightarrow B$ injective linear maps of $R$-modules, then the map $f \otimes_R M: A \otimes_R M\rightarrow B \otimes_R M$ is injective too. An interesting result about flatness is that $\forall i>0, \Tor^{R}_i(M, N) = 0$ if either $A$ or $B$ themselves are flat.} $R$-modules, which the interested reader can explore further in Dummit and Foote \S 17.1 \autocite{dummitandfoote}. Yet, as was alluded at the beginning of this section, the biggest application of the Tor functor lies in helping prove the ``universal coefficient theorem" for homology, one of the most important results from homological algebra. Our hope is that, after reading this guide, abstract algebra students will be equipped with the necessary framework to delve deeper into and appreciate the beauty of these fascinating topics.

\section{Conclusion}
Homological algebra, like many parts of abstract algebra, is an exercise in generalizing known ideas to create different ways of thinking about problems; as was evident in this work, the treated topics naturally had deep connections to topology and category theory, from which we motivated examples and applications of this field. 

We started this work by exploring the main building blocks behind homological algebra, most notably chain complexes, homology modules, and homotopies. Throughout this process, we discovered ideas such as being able to construct chain complexes of chain complexes through an application the ``Snake Lemma", as well as new ways of thinking about the classification of topological shapes, through the computation of holes at various dimensions, as enabled by homology modules. We also introduced the notion of what it means for two chain complexes to be chain homotopic, which is the algebraic analog of asking about when can one shape by continuously deformed into another. Having established this foundation, we then moved into more advanced topics, including projective resolutions and Tor functors. In these sections, we highlighted the importance of these two concepts in helping describe algebraic structures through their invariants, but also illustrated why homological algebra is a field worth studying by itself, beyond the topological applications.

Overall, through this guide, we hope to not only have provided our reader with the necessary background to further study homological algebra, but also with enough motivation and excitement to do so.

\newpage
\nocite{weibel}
\nocite{dummitandfoote}
\nocite{eisenbud}
\nocite{hungerford}
\nocite{rotman}
\nocite{gardner}
\nocite{mathstack}
\nocite{quanta}
\printbibliography

\end{document}